\documentclass[11pt]{amsart}

\usepackage{amsmath,amsthm,amsfonts,amssymb,mathrsfs}
\usepackage{inputenc,mathrsfs}

\numberwithin{equation}{section}
\usepackage[dvips]{graphicx}
\usepackage[colorlinks]{hyperref}

\newtheorem{definition}{Definition}[section]
\newtheorem{theorem}{Theorem}[section]

\newtheorem{lemma}{Lemma}[section]
\newtheorem{corollary}{Corollary}[section]

\theoremstyle{remark}
\newtheorem{remark}{Remark}[section]
\newtheorem*{notation}{Notation}
\newtheorem*{convention}{Conventions}
\newtheorem*{acknowledgements}{Acknowledgements}
\newtheorem*{MSC}{Mathematics subject clasification (2010)}

\title[Boundary estimates for the Ricci flow.]{Boundary estimates for the Ricci flow.}

\author[]{Panagiotis Gianniotis}
\address{Department of Mathematics\\ 
University College London\\
25 Gordon St, London WC1E 6BT, United Kingdom}
\email{p.gianniotis@ucl.ac.uk}

\setlength{\textwidth}{6.6in} \setlength{\textheight}{8.6in}
\hoffset=-0.8truein \voffset=0.0truein

\baselineskip=7.0mm
\setlength{\baselineskip}{1.09\baselineskip}

\DeclareMathOperator{\dist}{\textrm{dist}}

\DeclareMathOperator{\Rm}{\textrm{Rm}}
\DeclareMathOperator{\Ric}{\textrm{Ric}}
\DeclareMathOperator{\inj}{\textrm{inj}}
\DeclareMathOperator{\expo}{\textrm{exp}}
\DeclareMathOperator{\ide}{\textrm{id}}

\begin{document}
\maketitle
\begin{abstract}
In this paper we consider the Ricci flow on manifolds with boundary with appropriate control on its  mean curvature and conformal class. We obtain higher order estimates for the curvature and second fundamental form near the boundary, similar to Shi's local derivative estimates. As an application, we prove a version of Hamilton's compactness theorem  in which the limit has boundary.

Finally,  we show that in dimension three the second fundamental form of the boundary and its derivatives are a priori controlled in terms of the ambient curvature and some non-collapsing assumptions. In particular, the flow exists as long as the curvature remains bounded, in contrast to the general case where control on the second fundamental form is also required.

\begin{MSC}
 Primary: 53C44;  Secondary: 35K51 
\end{MSC}
\end{abstract}
\section{Introduction.}
Let $g(t)$ be Ricci flow on a manifold $M$, namely a solution to
$$\frac{\partial}{\partial t} g(t)=-2\Ric(g(t)),$$
not necessarily complete. Shi in \cite{Shi1} obtained local a priori estimates for derivatives of the curvature along Ricci flow, depending only on a curvature bound. See also  Theorem 13.1 in \cite{hamilton_formation}. The following theorem states a global version of these estimates.
\begin{theorem}
Let $(M^n,g(t))$ be a Ricci flow on a closed manifold $M$, $t\in [0,\frac{1}{K}],$ and assume 
$$|\Rm(g(t))|_{g(t)}\leq K.$$
Then, for every positive integer $j$ there exists $C_j=C(n,j)>0$ such that
$$|\nabla^j \Rm(g(t))|_{g(t)}\leq \frac{C_j K}{t^{j/2}},$$
in $M$ for all $t\in (0,\frac{1}{K}]$.
\end{theorem}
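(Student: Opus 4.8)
The plan is to argue by induction on $j$, combining the standard evolution inequalities for $|\nabla^i\Rm|^2$ under Ricci flow with the parabolic maximum principle on the closed manifold $M$. Since $M$ has no boundary, no cutoff functions are needed, which is exactly why Shi's local estimates upgrade to a clean global statement here; the technical heart is an iterated maximum principle argument with carefully chosen powers of $t$.

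First I would record the evolution inequalities (as in Hamilton, \cite{hamilton_formation}): writing schematically $\partial_t\nabla^i\Rm=\Delta\nabla^i\Rm+\sum_{a+b=i}\nabla^a\Rm*\nabla^b\Rm$, and absorbing the contributions of $\partial_t g=-2\Ric$ into the $a=0$, $b=0$ terms, one gets
$$\partial_t|\nabla^i\Rm|^2\leq \Delta|\nabla^i\Rm|^2-2|\nabla^{i+1}\Rm|^2+C(n,i)\sum_{a+b=i}|\nabla^a\Rm|\,|\nabla^b\Rm|\,|\nabla^i\Rm|.$$
The base case $j=0$ is just the hypothesis $|\Rm|\le K$.

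For the inductive step, assume $|\nabla^i\Rm|\le C_iK\,t^{-i/2}$ on $M\times(0,1/K]$ for $i=0,\dots,j-1$. Multiplying the evolution inequalities by the appropriate powers of $t$, inserting the inductive bounds, and using $tK\le1$ together with Young's inequality to turn the off-diagonal cubic terms into a harmless $C(n,j)K^3$ plus a controlled multiple of $t^{i-1}|\nabla^i\Rm|^2$, I obtain $\partial_t(t^i|\nabla^i\Rm|^2)\leq \Delta(t^i|\nabla^i\Rm|^2)+c_i\,t^{i-1}|\nabla^i\Rm|^2-2t^i|\nabla^{i+1}\Rm|^2+C(n,j)K^3$ for $1\le i\le j$ (with the cubic term harmless for $i\le j-1$ and producing the coefficient $c_j$ at the top level), and $\partial_t|\Rm|^2\le\Delta|\Rm|^2-2|\nabla\Rm|^2+C(n,j)K^3$ at the bottom. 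Then I would set
$$F=t^j|\nabla^j\Rm|^2+\sum_{i=0}^{j-1}\beta_i\,t^i|\nabla^i\Rm|^2$$
and choose the constants $\beta_{j-1},\beta_{j-2},\dots,\beta_0$ recursively from the top down so that at each level the good term $-2\beta_{i-1}t^{i-1}|\nabla^i\Rm|^2$ from level $i-1$ dominates the bad term $\beta_i\,c_i\,t^{i-1}|\nabla^i\Rm|^2$ coming from level $i$ (with $-2\beta_{j-1}t^{j-1}|\nabla^j\Rm|^2$ dominating $c_j\,t^{j-1}|\nabla^j\Rm|^2$, and $-2t^j|\nabla^{j+1}\Rm|^2\le 0$ simply discarded). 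With these choices all derivative terms cancel and $\partial_t F\le\Delta F+C(n,j)K^3$. The maximum principle on the closed manifold then yields $\max_M F(\cdot,s)\le\max_M F(\cdot,0)+C(n,j)K^3 s$; the positive powers of $t$ kill everything except $\beta_0|\Rm|^2\le\beta_0 K^2$ at $t=0$, and $K^3 s\le K^2$ for $s\le 1/K$, so $F\le C(n,j)K^2$ on $M\times(0,1/K]$. Since $F\ge t^j|\nabla^j\Rm|^2$, this gives $|\nabla^j\Rm(g(t))|\le C_jK\,t^{-j/2}$, completing the induction.

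The only delicate point is the bookkeeping in the inductive step: one must verify that every positive term produced by $\partial_t(t^i|\nabla^i\Rm|^2)$, and every off-diagonal cubic term, is exactly of the form $t^{i-1}|\nabla^i\Rm|^2$ up to constants and an additive $C(n,j)K^3$, so that the telescoping absorption through the $-2\beta_{i-1}t^{i-1}|\nabla^i\Rm|^2$ terms actually closes — and that the reduction $K^3s\le K^2$, which is precisely where $t\le 1/K$ is used, leaves the final constant depending only on $n$ and $j$. Everything else is the routine maximum principle argument on a closed manifold.
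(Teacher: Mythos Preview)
The paper does not give a proof of this theorem: it is stated in the introduction as a known background result, with references to Shi \cite{Shi1} and Hamilton \cite{hamilton_formation}, and is then used as motivation for the boundary estimates that form the actual content of the paper. There is therefore no ``paper's own proof'' to compare against.

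That said, your sketch is correct and is precisely the standard argument one finds in the cited references (and in subsequent textbook treatments). The key steps --- the evolution inequality for $|\nabla^i\Rm|^2$, the use of the inductive hypothesis to reduce the off-diagonal cubic terms at levels $i\le j-1$ to a harmless $C(n,j)K^3$, the telescoping choice of the coefficients $\beta_i$ so that the good term $-2\beta_{i-1}t^{i-1}|\nabla^i\Rm|^2$ from level $i-1$ absorbs the bad term $\beta_i c_i t^{i-1}|\nabla^i\Rm|^2$ from level $i$, and the final maximum-principle bound $F\le \beta_0 K^2 + C(n,j)K^3 t\le C(n,j)K^2$ --- are all handled correctly. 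The bookkeeping you flag as ``delicate'' does close exactly as you describe, and the dependence of the final constant only on $n$ and $j$ is clear from the construction.
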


Such estimates not only reveal the smoothing character of Ricci flow, but they are also an essential ingredient of a compactness theorem for sequences of Ricci flows, proven by Hamilton in \cite{hamcomp}. This theorem allows the blow-up analysis of singularities, and is an important tool in the study of the global behaviour of the flow.

When $M$ is a manifold with boundary, although there have been several local existence results for the Ricci flow, very little is known regarding its global behaviour, even in dimension 3. 

Regarding local existence, Shen  in \cite{YShen} and Pulemotov in \cite{Pulemotov} consider natural Neumann-type boundary  conditions.  On the other hand, in \cite{gianniotis} the author considers a mixed Dirichlet-Neumann  boundary value problem for the Ricci flow, motivated by work of Anderson on boundary value problems for Einstein metrics in \cite{Anderson1}, where the conformal class of the boundary and its mean curvature provide an elliptic boundary value problem for the Einstein equations.

On the global behaviour, Shen in \cite{YShen} studies the case the initial metric has positive Ricci curvature and the boundary remains totally geodesic, while Cortissoz in \cite{Cortissoz} studies the same when the boundary is convex and umbilic. Moreover, with  Murcia in \cite{Cort} they consider  the  2-dimensional  Ricci flow assuming positive Gauss curvature and convex boundary.

To understand interior singularities, where after rescaling the boundary is sent to infinity, a local version of Shi's estimates and Hamilton's compactness theorem would suffice. However, when singularities form close to the boundary one needs a compactness theorem which can handle the possibility that the limit is a manifold with boundary. Such compactness result would require some analogue of Shi's estimates for the curvature to be valid on a neighbourhood of the boundary, together with higher order estimates of its second fundamental form.

In \cite{Cortissoz,cort_rotsym,Cort} the authors obtain estimates for derivatives of the curvature valid for the boundary value problems considered. However, it is not clear whether these techniques can be adapted to deal with boundary value problems like that in \cite{gianniotis}. 

The main difficulty is essentially that the curvature doesn't seem to satisfy boundary conditions which would allow an application of the maximum principle. Moreover, as mentioned above, the possible applications require control on higher derivatives of the second fundamental form as well.

In this paper, with a technique inpired by the work of Anderson in \cite{and90}, we obtain both these higher order estimates at the same time for the boundary value problem introduced in \cite{gianniotis}. Moreover, it seems that our approach could provide similar estimates for other boundary value problems for the Ricci flow as well. 

To summarize the result in \cite{gianniotis}, let $(M,g_0)$ be a compact Riemannian manifold with boundary,  $\gamma(t)$ be any smooth one parameter family of metrics on $\partial M$ and $\eta(t)$ be any smooth function on $\partial M\times [0,\infty)$, satisfying certain zeroth order compatibility conditions. Then, there exists a Ricci flow $g(t)$ on $M$, smooth for $t>0$, satisfying
\begin{eqnarray}
[g^T(t)]&=&[\gamma(t)],\nonumber\\
\mathcal H(g(t))&=&\eta(t).\nonumber
\end{eqnarray}
Here, $g^T$ denotes the induced metric on $\partial M$, $[\;\cdot\;]$ the conformal class, and $\mathcal H(g)$ the mean curvature of $\partial M$ with respect to $g$. Morevover, $g(t)\rightarrow g_0$ in the $C^{1,\alpha}$ Cheeger-Gromov sense.

In \cite{and90} Anderson uses the ellipticity of the Ricci tensor in harmonic coordinates and elliptic regularity to obtain a $C^{1,\alpha}$ compactness result for the class of manifolds with two sided Ricci curvature bounds and an injectivity radius bound. In particular, he introduces the notion of harmonic radius, the maximal radius of geodesic balls on which there exist harmonic coordinates such that the scale-invariant $C^{1,\alpha}$ norms of the metric components are uniformly bounded. The compactness result is obtained by showing, via a blow up argument, that the Ricci curvature and injectivity radius bounds control the harmonic radius from below. This scheme was later used  in \cite{AndTaylor} and \cite{knox_compactness} to obtain $C^{1,\alpha}$- compactness results for manifolds with boundary.

The primary purpose of this paper, and the content of Section \ref{bndryestimates},  is to adapt this scheme to the parabolic setting, the Ricci flow, in order to estimate derivatives of the curvature near the boundary and the second fundamental form of the boundary. The motivation comes from the fact that the equivalent Ricci-DeTurck flow satisfies a parabolic boundary value problem for which one has the parabolic regularity estimates of Solonnikov from \cite{MR0211083}. 

We briefly describe the main result. Let $M$ be a manifold with compact boundary. In the following, $i_{b,g(0)}$ denotes the size of a collar neighbourhood of the boundary diffeomorphic to $\partial M\times [0,i_{b,g(0)})$ via the normal exponential map of $\partial M$, as described in Definition \ref{inj2}. 

A Ricci flow with $\Lambda$-controlled boundary is, briefly, a Ricci flow with a choice $\gamma(t)$ of representatives of $[g^T(t)]$, uniformly equivalent to $g^T(t)$. Moreover, in $\gamma$-harmonic coordinates, $\gamma(t)$ and the mean curvature $\mathcal H(g(t))$ are controlled in the H\"older sense of order $m+\epsilon$ and $m-1+\epsilon$ respectively, where $m$ is a large  integer.  See Definition \ref{controlled_data} for more details.

\begin{theorem}\label{main_result}
Let $(M,g(t),\gamma(t))$, $t\in [0,T]$,  be a complete Ricci flow with $\Lambda$-controlled boundary in $(0,T]$. Suppose
\begin{enumerate}
\item $|\Rm(g(t))|_{g(t)}\leq K$ in $M$ and $|\mathcal A(g(t))|_{g^T(t)}\leq K$ on $\partial M$ for all $t$.
\item $i_{b,g(0)}\geq i_0$.
\end{enumerate}
For any $j=1,\ldots,m-2$ and $\tau>0$, there exists a constant $C=C(n,\tau,T,\Lambda,l,j,K,i_0)>0$ such that for all $t\in [\tau,T]$ 
\begin{eqnarray}
|\nabla^j \Rm(g(t))|_{g(t)}&\leq& C \quad \textrm{in $M$},\nonumber\\
|\nabla^{j+1} \mathcal A(g(t))|_{g^T(t)}&\leq& C, \quad \textrm{on $\partial M$}.\nonumber 
\end{eqnarray}
\end{theorem}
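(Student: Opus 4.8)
The plan is to follow Anderson's blow-up scheme, transplanted to the parabolic setting via the Ricci–DeTurck trick. The first step is to reduce the problem to a scale-invariant statement about a \emph{parabolic boundary harmonic radius}. Precisely, for a point $p\in\partial M$ and a time $t_0$ one defines $r_h(p,t_0)$ to be the largest $r$ such that on the parabolic cylinder $P_r(p,t_0)=B_r(p)\times(t_0-r^2,t_0]$ there exist $\gamma(t)$-harmonic boundary coordinates, extended into $M$ by harmonic (or normal-geodesic) coordinates, in which the rescaled metric components $g_{ij}$ together with the lapse/boundary data satisfy scale-invariant $C^{m+\epsilon,\frac{m+\epsilon}{2}}$ parabolic Hölder bounds. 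The assertion to prove is that under hypotheses (1)–(2) there is $r_0=r_0(n,\tau,T,\Lambda,l,K,i_0)>0$ with $r_h(p,t)\geq r_0$ for all $p\in\partial M$, $t\in[\tau,T]$; granting this, the desired estimates on $|\nabla^j\Rm|$ and $|\nabla^{j+1}\mathcal A|$ follow immediately by differentiating the metric and second fundamental form in these good coordinates and undoing the rescaling.

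The second step is the contradiction argument. Suppose no such $r_0$ exists; then there are Ricci flows $(M_k,g_k(t),\gamma_k(t))$ and points $(p_k,t_k)$ with $t_k\in[\tau,T]$ and $r_h(p_k,t_k)=\lambda_k\to 0$. Rescale parabolically by $\lambda_k^{-2}$ and translate $t_k$ to $0$, obtaining flows $\tilde g_k(t)$ on longer and longer backward time-intervals with $r_h(\tilde p_k,0)=1$. The curvature bound (1) forces $|\Rm(\tilde g_k)|\to 0$ and $|\mathcal A(\tilde g_k)|\to 0$, while the collar bound (2) survives rescaling (the collar size blows up), so the boundary stays ``flat at unit scale.'' Using the uniform $C^{m+\epsilon}$ control afforded by the harmonic-radius-one normalization, one extracts a $C^{m+\epsilon}$ Cheeger–Gromov limit $(M_\infty,g_\infty(t),\gamma_\infty(t))$ which is a Ricci flow on a half-space (or a manifold with totally geodesic boundary) with $\Rm\equiv 0$ and $\mathcal A\equiv 0$ — hence $g_\infty$ is the flat metric on a half-space, constant in time. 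The key input making the limit extraction legitimate is the interior+boundary parabolic regularity of Solonnikov applied to the Ricci–DeTurck flow: on each fixed parabolic cylinder the DeTurck-gauged metrics satisfy a uniformly parabolic boundary value problem with the $\Lambda$-controlled data as boundary terms, so $C^{m+\epsilon}$ bounds on a slightly larger cylinder are automatic, giving the compactness and also ensuring the limit actually solves the flow up to $t=0$.

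The third step is to derive the contradiction: for the flat half-space with the standard flat boundary data, the harmonic radius is clearly $+\infty$, whereas the harmonic radius is upper semicontinuous (or at least does not drop in the limit) under $C^{m+\epsilon}$ convergence, so $r_h(\tilde p_k,0)=1$ would force $r_h(\tilde p_\infty,0)\leq 1<\infty$, a contradiction once one checks that the rescaled boundary data $\gamma_k$ and $\mathcal H(g_k)$ also converge to the flat model — this is exactly where the $\Lambda$-controlled hypothesis is used, since it pins the conformal representative and the mean curvature in a scale-invariant Hölder norm that passes to the limit. I expect the main obstacle to be precisely this last point: setting up the notion of parabolic boundary harmonic radius so that (a) it is genuinely scale-invariant, (b) it is realized by coordinates in which the \emph{Ricci–DeTurck} flow (not the Ricci flow, which is only parabolic modulo gauge) satisfies Solonnikov's boundary value problem, and (c) the passage between Ricci flow and Ricci–DeTurck flow near the boundary respects the boundary conditions on $[g^T]$ and $\mathcal H$; reconciling the gauge-fixing with the boundary conditions, and proving the lower semicontinuity of $r_h$ together with the non-degeneracy (time-interval length going to infinity so the limit flow exists on $(-\infty,0]$, not just a short interval), will be the technically delicate heart of the argument.
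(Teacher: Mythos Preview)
Your blow-up scheme is essentially the paper's approach: define a scale-invariant parabolic radius, show via Solonnikov-type regularity that a lower bound on it yields the derivative estimates, then run a contradiction/rescaling argument converging to a flat half-space to obtain that lower bound. Two points deserve comment.

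First, a genuine gap. Hypothesis (2) gives $i_{b,g(0)}\geq i_0$ only at $t=0$, but your blow-up points $(p_k,t_k)$ live at times $t_k\in[\tau,T]$; the statement ``the collar bound (2) survives rescaling (the collar size blows up)'' is not justified, because after translating $t_k$ to $0$ the original time $0$ has receded to $-t_k\lambda_k^{-2}\to-\infty$ and carries no information about the collar at the blow-up time. In the paper this is exactly the one new ingredient in the proof of the global theorem over the local version: one shows that under the curvature bound $|\Rm|\leq K$, the boundary injectivity radius $i_{b,g(t)}$ satisfies a differential inequality $\frac{d}{dt}i_{b,g(t)}\geq -C(n,K)\,i_{b,g(t)}$ (via the first variation of length of the shortest normal geodesic meeting $\partial M$ orthogonally at both ends, whenever $i_{b,g(t)}$ is below the focal radius), hence $i_{b,g(t)}\geq e^{-CT}i_0$ for all $t\in[0,T]$. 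Once this is in place your argument goes through.

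Second, a technical imprecision you yourself flag. The coordinates in which the Ricci--DeTurck equation holds cannot be static harmonic or geodesic normal coordinates extended from the boundary; they must be \emph{time-dependent} diffeomorphisms $\phi_s$ solving the harmonic map heat flow $\partial_s\phi_s=\Delta_{g(s),\delta}\phi_s$ with $\phi_s|_{\partial M}$ fixed and equal to $\gamma(\bar t)$-harmonic boundary coordinates. Only then is $(\phi_s)_*g$ a genuine Ricci--DeTurck flow satisfying the boundary conditions $\mathcal W=0$, $\mathcal H=\eta$, $[\hat g^T]=[\gamma]$ to which Solonnikov's estimates apply. Correspondingly, the parabolic radius in the paper is defined with low-order control ($C^{\epsilon}$ plus a weighted $C^{2+\epsilon}$ bound handling the lack of higher compatibility at $s=0$), and the $C^{m+\epsilon}$ control is \emph{derived} from parabolic regularity on a smaller cylinder; defining $r_h$ directly with $C^{m+\epsilon}$ bounds as you do is workable but makes the existence of parabolic coordinates and the lower-semicontinuity lemma harder to verify.
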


The strategy is to define an analogous notion of ``harmonic coordinates'' suitable for our purposes, using a local harmonic map heat flow. Then, parabolic theory provides the higher order estimates near the boundary which allow us to obtain a compactness result under the assumption that the ``parabolic radius", the analogue of the harmonic radius, is bounded below. Finally, a blow up argument shows that the parabolic radius is controlled in terms of geometric data, thus obtaining the higher order estimates of Theorem \ref{main_result}. Moreover, in Theorem \ref{estimates} we prove a local version of the estimate above.

In Section \ref{comp}, we apply these estimates to obtain a version of Hamilton's compactness theorem for sequences of Ricci flows on manifolds with boundary, in Theorem \ref{comp2}. Then, in Section \ref{threemnflds}, we show that in dimension three it is possible to obtain a priori control of the second fundamental form of the boundary and its derivatives along the Ricci flow, in terms of a curvature bound and some non-collapsing assumptions. This is Theorem \ref{sffb}. In particular, this allows us in Corollary \ref{cp} to improve the continuation principle proven in \cite{gianniotis}, in that only a curvature bound suffices for the continuation of the flow. Thus, we rule out the possibility that the flow will develop singularities where the second fundamental form blows up in finite time while the curvature remains bounded.

\section{Preliminary definitions.}
 Let $M$ be a $(n+1)$-dimensional manifold with non-empty boundary $\partial M$.  Given $x\in \partial M$, $(M,x)$ will be called a pointed manifold with boundary. Let $h$ be a, possibly incomplete (in the sense of metric spaces), Riemannian metric on $M$. We will denote by $\overline M$ and $\overline{\partial M}$ the metric completions of $M$ and $\partial M$ respectively. $(M,h)$ is said to be complete, if $\overline M=M$.

\begin{definition}\label{inj1}
\mbox{}
Let $x\in \partial M$.
\begin{enumerate}
\item  $i_{b,loc,h}(x)$ will denote the maximal number with the following properties. We require that the metric ball $B^\partial_h(x,i_{b,loc,h}(x))\subset \partial M$ has compact closure and that the normal exponential map over this ball induces a diffeomorphism between $B^\partial_h(x, i_{b,loc,h}(x))\times [0,i_{b,loc,h}(x))$ and its image. 
\item $\inj_{\partial M,h^T}(x)$ will be the maximal radius such that the $h^T$-exponential map on $B(0,\inj_{\partial M,h^T}(x))\subset \mathbb R^n$ is a diffeomorphism onto its image. Similarly, we denote by $\inj_{M,h}(x)$ the interior injectivity radius defined at an interior point $x$.
\end{enumerate}
\end{definition}

We will also need the concept of the boundary injectivity radius, defined below for complete manifolds with boundary (see also \cite{AndTaylor}).

\begin{definition}\label{inj2}
Let $(M,h)$ be a complete Riemannian manifold with boundary $\partial M$. The boundary injectivity radius $i_{b,h}$ is the maximal number with the property that the normal exponential map of $\partial M$ is a diffeomorphism between $\partial M\times [0,i_{b,h})$ and $\{ x| \dist_h(x,\partial M)< i_{b,h}\}$.
\end{definition}

Now, let $g(t)$ be a one parameter family of uniformly equivalent metrics on $M$, $t\in (a,b]$ and $0\in (a,b]$. Set $\Sigma=\overline{\partial M}\setminus \partial M$ and define 
$$D_g(x,t)=\min\left\{\dist_{g^T(t)}(x,\Sigma), (t-a)^{1/2},1\right\}.$$
If $\Sigma=\emptyset$ we set $D_g(x,t)=1$.

Write $B(0,r)$ and $B(0,r)^+$ for the Euclidean open ball in $\mathbb R^n$ and half ball in the upper half space $\mathbb R^{n+1}_+=\{x^0\geq 0\}$ respectively, of radius $r$.

We are going to need  a scale invariant version of the parabolic $C^{l,l/2}$ norm on $B(0,r)\times [0,r^2]$ and $B(0,r)^+\times [0,r^2]$. In the following definitions, we will denote both parabolic domains by $G$, for simplicity. First, for any $\mu\in(0,1)$ we introduce the notation
\begin{eqnarray}
\langle v \rangle_{\mu,x}&=&\sup_{(x,t),(y,t)\in G}\frac{|v(x,t)-v(y,t)|}{|x-y|^\mu},\nonumber\\
\langle v \rangle_{\mu,t}&=&\sup_{(x,t),(x,t')\in G}\frac{|v(x,t)-v(x,t')|}{|t-t'|^\mu}.\nonumber
\end{eqnarray}
Now, fix $p>n+3$, $\epsilon=1-\frac{n+3}{2}$ and a large positive integer $m$. For $l=m+\epsilon$, the scale invariant $C^{l,l/2}$ norm on $G$ is defined by
\begin{eqnarray}
| v |_{l,r}^* &=& \sum_{k=0}^{m}  \sum_{2q+|s|=k} r^k|\partial_t^q\partial_x^s v|_{C^0}+ \sum_{2q+|s|=m}r^l\langle \partial_t^q\partial_x^s v \rangle_{\epsilon,x}+ \sum_{0<l-2q-|s|<2}r^l\langle \partial_t^q\partial_x^s v \rangle_{\frac{l-2q-|s|}{2},t}.\nonumber
\end{eqnarray}
We will also use the corresponding norms defined in smaller time intervals, namely in the domains $B(0,r)\times [(\eta r)^2,r^2]$ and $B(0,r)^+\times [(\eta r)^2,r^2]$ which will be denoted by $|u|_{l,r,\eta}^*$, for $0\leq \eta <1$.

\begin{convention}
In the following, if the order of some convergence is higher than 3 it will be a function of $m$ and will be referred to as smooth throughout the paper. Also $Q>1$  will be a fixed constant and $a<0\leq b$.
\end{convention}

\section{Boundary estimates.}\label{bndryestimates}
In this section we prove derivative estimates for the curvature near the boundary, and the second fundamental form of the boundary. Such estimates are essentially a consequence of parabolic regularity, given high order control of appropriate boundary data. 

Here we will consider the boundary data in \cite{gianniotis}, namely the mean curvature and the conformal class of the boundary. In particular, we begin with the following definition .

\begin{definition}\label{controlled_data} 
A Ricci flow on a manifold with boundary $(M,g(t))$, $t\in (a,b]$, has boundary with $\Lambda$-controlled conformal class and mean curvature in the interval $(a,b]$, if there is a smooth one-parameter family $\gamma(t)$ of metrics on $\partial M$, such that 
\begin{enumerate}
\item $[g^T(t)]=[\gamma(t)]$ and $\Lambda^{-2}\gamma(t)\leq g^T(t) \leq \Lambda^2\gamma(t)$ for all $t\in(a,b]$.
\item For every $(\bar x,\bar t)\in \partial M\times (a,b]$, set $\tilde \gamma(s)=\gamma(s+\bar t-r^2)$ and $\tilde{\mathcal H}(s)=\mathcal H(g(s+\bar t-r^2))$, where  $\mathcal H(g)$ denotes the mean curvature of the boundary with respect to the metric $g$. 
We require that for any $r\leq \rho_{\Lambda}(\bar x,\bar t)=\Lambda^{-1} D_\gamma(\bar x,\bar t)$ there exist $\gamma(\bar t)$-harmonic coordinates $u:U\rightarrow B(0,r)$ around $\bar x$ such that
\begin{enumerate}
\item $Q^{-1}\delta\leq \tilde\gamma(s) \leq Q\delta$, \qquad \textrm{in $B(0,r)$ and $s\in[0,r^2]$,}
\item $|\tilde \gamma_{\alpha\beta}|_{l,r}^*\leq Q$, where $\alpha,\beta=1,...,n$,
\item $|\tilde{\mathcal H}|_{l-1,r}^*\leq Q$,
\end{enumerate}
\end{enumerate}
Here $\delta$ denotes the Euclidean metric. Such triplet $(M,g(t),\gamma(t))$ will be called a Ricci flow with $\Lambda$-controlled boundary in $(a,b]$.

Moreover, we will say that $(M,g(t),\gamma(t))$, $t\in [a,b]$, has $\Lambda$-controlled boundary in $[a,b]$ if it is $\Lambda$-controlled in $(a,b]$ and in addition for all $\bar x\in \partial M$ and all $r\leq \Lambda^{-1}\min\{\dist_{g^T(0)}(\bar x, \overline{\partial M}\setminus \partial M),1\}$ there exist $\gamma(0)$-harmonic coordinates around $\bar x$ in $\partial M$ in which
\begin{eqnarray}
|\gamma_{\alpha\beta}|^*_{l,r}&\leq& Q,\nonumber\\
|\mathcal H(g(\cdot))|^*_{l-1,r}&\leq& Q.\nonumber
\end{eqnarray}
\end{definition}

A few comments are needed in order to clarify the concept of a Ricci flow with $\Lambda$-controlled boundary. First of all, since we are going to be considering sequences of such Ricci flows and their limits, an issue that might arise is that information for the conformal class of the boundary is lost in the limit. To illustate how this may happen,  consider the sequence of Riemannian manifolds  $(S^n,g_k)$, and let  $g_k=kg_{round}$, $k=1,2,...$, $\gamma_k=g_{round}$. Unfortunately, although $[g_k]=[\gamma_k]$ for every $k$, this does not hold in the limit. To see this, observe that $(S^n,g_k,p)$  converges to  $(\mathbb R^n, g_{Euclid},0)$ in the pointed Cheeger-Gromov topology, whereas $(S^n,\gamma_k,p)$ converges to $(S^n,g_{round})$. An assumption of the form of item (1) of the definition above prevents such phenomena to occur.

Item (2) of the definition may be seen as a parabolic analogue of a lower bound on the harmonic radius (as in \cite{and90}) for $\gamma(t)$. At the same time, we require control of the mean curvature of $g$ with respect to the geometry induced by $\gamma$.

\subsection{Parabolic coordinates.} 
\begin{definition}
Let $(M,g(t),\gamma(t))$ be a Ricci flow with $\Lambda$-controlled boundary and $(\bar x,\bar t)\in \partial M\times (a,b]$. Parabolic coordinates of radius $r$ around $(\bar x,\bar t)$ on $(M,g(t),\gamma(t))$ consist of a pair $(\Omega_s,\phi_s)$, where
\begin{enumerate}
\item For $s\in [0,r^2]$, $\phi_s:\Omega_s\rightarrow B(0,r)^+$ are coordinates on $M$ around $\bar x$ such that
\begin{eqnarray}
\frac{\partial}{\partial s}\phi_s&=&\Delta_{g(\bar t-r^2+s),\delta}\phi_s,\nonumber\\
\phi_s|_{\Omega_s\cap\partial M}&=&\phi_0|_{\Omega_0\cap\partial M},\nonumber
\end{eqnarray}
\item $\phi_s|_{\Omega_s\cap\partial M}: \Omega_s\cap\partial M\rightarrow B(0,r)$ form $\gamma(\bar t)$-harmonic coordinates on $\partial M$ around $\bar x$.
\end{enumerate}
\end{definition} 

To motivate this definition, notice that the push-forward flows $\hat g(s)=(\phi_s)_* g(\bar t-r^2+s)$ evolve by the Ricci-DeTurck flow, which is a parabolic equation. Namely,
$$\frac{\partial}{\partial s}\hat g=-2\Ric(\hat g)+\mathcal L_{\mathcal W}\hat g,$$
where $\mathcal W^i=\hat g^{pq}\widehat \Gamma^i_{pq}$, see for example \cite{MR2274812}. This may be viewed as the parabolic analogue of the fact that the Ricci tensor becomes an elliptic operator in harmonic coordinates. 

\begin{lemma}[Existence of parabolic coordinates]
Given $(M,g(t),\gamma(t))$ and $(\bar x,\bar t)\in \partial M\times (a,b]$, there exists a small $r>0$ and parabolic coordinates $(\Omega_s,\phi_s)$ of radius $r$ based at  $(\bar x,\bar t)$ such that $\hat g(s):=(\phi_s)_*g(\bar t -r^2+ s)$ satisfies the bounds
\begin{eqnarray}
Q^{-1}\delta\leq \hat g(s)\leq Q\delta &&\qquad \textrm{ for $s\in [0,r^2]$,}\label{prad1}\\
|\hat g_{ij}|^*_{\epsilon,r}\leq Q,\label{prad2}\\
\sup_{\eta\in (0,1)} \eta^8 |\hat g_{ij}|^*_{2+\epsilon,r,\eta}\leq Q.&&\label{prad3}
\end{eqnarray}
\end{lemma}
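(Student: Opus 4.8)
The plan is to construct the parabolic coordinates by a two-step procedure — first solving on the boundary, then extending into the interior via a harmonic map heat flow — and then to obtain the estimates \eqref{prad1}--\eqref{prad3} by combining the $\Lambda$-controlled hypothesis (which gives good control of $\gamma(t)$ and $\mathcal H(g(t))$ in $\gamma(\bar t)$-harmonic coordinates) with the linear parabolic Schauder theory of Solonnikov for the boundary value problem satisfied by $\hat g(s)$ and by $\phi_s$. First, I would fix $(\bar x,\bar t)$ and take $\gamma(\bar t)$-harmonic coordinates on $\partial M$ of radius $r$ as furnished by Definition \ref{controlled_data}(2); by shrinking $r$ we may assume all the bounds (a)--(c) there hold and that $g(\bar t - r^2 + s)$ is uniformly equivalent to the Euclidean metric near $\bar x$. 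These boundary coordinates are time-independent (the second equation in the definition forces $\phi_s$ to fix the boundary), so they provide the Dirichlet boundary data for the interior problem.

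Second, I would solve the harmonic map heat flow $\partial_s \phi_s = \Delta_{g(\bar t - r^2 + s),\delta}\phi_s$ on a half-ball $\Omega_0 \subset M$ with $\phi_0$ an arbitrary smooth chart extending the boundary coordinates, and with the boundary condition $\phi_s|_{\partial M} = \phi_0|_{\partial M}$. For short time and a possibly smaller radius, this is a (weakly) parabolic system with smooth coefficients — the coefficients are built from $g(\bar t - r^2 + s)$, which is smooth for $s > 0$ but only $C^{1,\alpha}$ up to $s = 0$ by the hypotheses in \cite{gianniotis}; one should work on $[0,r^2]$ using the $C^{1,\alpha}$ regularity at the initial time and interior-in-time smoothing, or simply start at a slightly positive time and absorb the loss. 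Short-time existence of $\phi_s$ as a diffeomorphism onto a half-ball $B(0,r)^+$ follows from standard parabolic theory once $r$ is small, since $\phi_0$ is a diffeomorphism and the property of being a diffeomorphism is open. That $\hat g(s) = (\phi_s)_* g(\bar t - r^2 + s)$ solves Ricci--DeTurck is the computation recalled in the text.

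Third, for the estimates: the main point is that in these coordinates $\hat g(s)$ satisfies a parabolic boundary value problem. The $\Lambda$-controlled hypothesis says the tangential part $\hat g^T(s) = \tilde\gamma(s)$ (up to the conformal factor, which is itself controlled since $\Lambda^{-2}\gamma \le g^T \le \Lambda^2\gamma$) has bounded $|\cdot|^*_{l,r}$ norm, and the mean curvature $\hat{\mathcal H}(s) = \tilde{\mathcal H}(s)$ has bounded $|\cdot|^*_{l-1,r}$ norm; these are exactly the Dirichlet-type (conformal class) and Neumann-type (mean curvature) boundary data of the Ricci--DeTurck flow for the boundary value problem of \cite{gianniotis}, which Solonnikov's theorem \cite{MR0211083} treats. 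Applying the interior--boundary parabolic regularity estimate of Solonnikov to $\hat g(s)$ on $B(0,r)^+ \times [0,r^2]$, with the uniform ellipticity \eqref{prad1} (which holds for $r$ small by continuity from the initial data, since $g(0)$-equivalence to the Euclidean metric is part of the setup and $\hat g$ is continuous in $s$), yields the $C^{\epsilon,\epsilon/2}$ bound \eqref{prad2} and, after a further gain of two derivatives away from the initial time slice, the weighted $C^{2+\epsilon,(2+\epsilon)/2}$ bound \eqref{prad3}; the weight $\eta^8$ is the usual penalty for losing control near $s = 0$, reflecting that the initial metric is only $C^{1,\alpha}$. The hard part will be carefully setting up the linear boundary value problem satisfied by the variation of $\hat g$ — in particular identifying precisely how the conformal-class and mean-curvature conditions linearize to fit the complementing (Lopatinski--Shapiro) conditions required by Solonnikov's framework — and tracking the scaling so that the stated scale-invariant norms $|\cdot|^*_{l,r}$ appear with the correct powers of $r$; this is essentially the parabolic analogue of verifying ellipticity of the boundary value problem in harmonic coordinates, done in \cite{gianniotis}, and most of the real work has been packaged into Definition \ref{controlled_data}.
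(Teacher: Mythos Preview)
Your construction --- $\gamma(\bar t)$-harmonic coordinates on the boundary, extended by the harmonic map heat flow into the interior --- matches the paper's. The divergence is in how you propose to obtain the estimates (\ref{prad1})--(\ref{prad3}).

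The paper does \emph{not} invoke Solonnikov's regularity for the Ricci--DeTurck system here; that machinery is reserved for Lemma~\ref{bounds}, where the bounds (\ref{prad1})--(\ref{prad3}) are \emph{assumed} and used as input. For the present existence lemma the paper exploits a simplification you overlook: since the target of the harmonic map heat flow is Euclidean, the system $\partial_s \varphi_s = \Delta_{g_s,\delta}\varphi_s$ is \emph{linear} in $\varphi_s$. Standard $W^{2,1}_p$ estimates for a linear parabolic Dirichlet problem on a fixed smooth domain $B\times[0,\tau]$ give uniform $C^{1+\epsilon,(1+\epsilon)/2}$ control of $\varphi_s$ (constants depending on the given flow $g(\cdot)$), so $\varphi_s$ remains a diffeomorphism. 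For the higher-order bound the paper introduces $v^i = u^i + s(\varphi_s^i - u^i)$, which satisfies a linear heat equation with first-order compatibility at $\partial B \times \{0\}$; this yields $C^{3+\epsilon}$ control of $v$, hence $|\partial\varphi|^*_{2+\epsilon,r,\eta}\le C/\eta^4$, and pushing forward (quadratic in $\partial\varphi$) explains the $\eta^8$ weight in (\ref{prad3}). Shrinking $r$ then achieves the fixed constant $Q$.

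Your route via Solonnikov on $\hat g$ has a genuine gap: the Ricci--DeTurck system is quasilinear, so Solonnikov's estimates require a priori H\"older control of the coefficients --- which are built from $\hat g$ itself. In Lemma~\ref{bounds} that control is hypothesised (this is precisely the role of the parabolic radius), but here you are trying to establish it, so the argument is circular as written. You could escape by noting that $\hat g$ is smooth for the given flow, but then Solonnikov adds nothing and you still must explain why the scale-invariant norms can be made $\le Q$ by shrinking $r$, which your proposal does not address. Finally, your explanation of the $\eta^8$ weight is wrong: $g(\cdot)$ is smooth on $[\bar t - r^2, \bar t]\subset (a,b]$ for small $r$, so there is no $C^{1,\alpha}$ issue at $s=0$. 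The weight comes from the lack of higher-order compatibility of the identity initial data for the harmonic map heat flow at its Dirichlet boundary, as the Remark following the paper's proof makes explicit.
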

\begin{proof}
Consider an arbitrary coordinate system $u:U\rightarrow B(0,2r')^+$ around $\bar x$ which restricts to $\gamma(\bar t)$-harmonic coordinates on $\partial M$ and satisfies $g(\bar t)_{ij}|_{u=0}=\delta_{ij}$. Clearly, for small $r'$ the condition $Q^{-1}\delta\leq g(\bar t)\leq Q \delta$ holds in  $B(0,2r')^+$. In the following, we will use $u$ to identify $U$ with $B(0,2r')^+$.

Let $B$ be a domain with smooth boundary such that $B(0,\frac{4r'}{3})^+\subset B\subset B(0,\frac{5r'}{3})^+$.   For appropriate $r>0$ which we will define later, consider solutions $\varphi_s:(B,g(\bar t-r^2+s))\rightarrow (B,\delta)$ of the Dirichlet problem for the harmonic map heat flow 
\begin{eqnarray}
\frac{\partial}{\partial s}\varphi_s&=&\Delta_{g(\bar t-r^2+s),\delta}\varphi_s,\nonumber\\
\varphi_0&=&\ide_B,\nonumber\\
\varphi_s|_{\partial B}&=&\ide_{\partial B},\quad\textrm{for every}\;s\in[0,r^2].\nonumber
\end{eqnarray}

Since the metric on the target of $\varphi_s$ is the Euclidean, the flow simplifies to the following initial-boundary value problem for a system of linear heat equations in $B$.
\begin{eqnarray}
\frac{\partial}{\partial s}\varphi_s^i&=&g_s^{kl}\partial^2_{kl}(\varphi_s^i)-g_s^{kl}\Gamma_{s,kl}^m\partial_m(\varphi_s^i),\nonumber\\
\varphi_0^i&=&u^i,\nonumber\\
\varphi_s^i|_{\partial B}&=&u^i|_{\partial B},\quad\textrm{for every}\;s\in[0,r^2],\nonumber
\end{eqnarray}
where $u^i$ are the coordinates in $\mathbb R^{n+1}$, $\varphi_s=(\varphi_s^0,\ldots,\varphi_s^n)$ and we write $g_s, \Gamma_s$ for $g(\bar t-r^2+s)$ and its Christoffel symbols.

Now,  $\varphi_s^i-u^i$ satisfy linear heat equations with homogeneous initial and boundary conditions. It follows by the parabolic estimates in \cite{MR0241822} that there is a $C$ depending on $g_{ij}(\cdot)$ such that
$$||\varphi_s^i||_{W^{2,1}_p(B\times (0,r^2))}\leq C,$$
where $$|| u ||_{W^{2,1}_p( B\times (0,r^2))}=\left( \int_{B\times (0,r^2)} (|u|^p  +|Du|^p + |D^2 u|^p + |\partial_t u|^p )dx dt  \right)^{1/p}.$$
Hence, by the embedding  $ W^{2,1}_p\subset C^{1+\epsilon, \frac{1+\epsilon}{2}}$, for $p>n+3$ and $\epsilon=1-\frac{n+3}{p}$ (see \cite{MR0241822}  for instance), there exists a $\tau>0$ such that $\varphi_s$ are diffeomorphisms for $s\in [0,\tau]$. In particular $\tau$ does not depend on $r$ or $\bar t$ for any given $(g(\cdot),\gamma(\cdot))$. Moreover a uniform $C^{\epsilon,\epsilon/2}$-bound of $\partial \varphi_s$ in $B\times [0,\tau]$ holds.

To obtain higher order control of $\varphi_s$ consider the map $v=(v^0,\ldots,v^n)$ where $v^i=u^i+s(\varphi^i_s-u^i)$. It satisfies the system
\begin{eqnarray}
\frac{\partial}{\partial s}v^i-\Delta_{g_s}v^i=(\varphi_s^i-u^i)-(1-s)\Delta_{g_s} u^i\nonumber
\end{eqnarray}
and the boundary condition
\begin{eqnarray}
v^i=u^i,\quad \textrm{on}\; \partial B\times [0,r^2].\nonumber
\end{eqnarray}
Observe that the first order compatibility condition holds, namely
$$\left.\frac{\partial}{\partial s}v^i\right|_{\partial B\times 0}=\Delta_g (v^i-u^i)|_{\partial B\times 0}=0=\left.\frac{\partial}{\partial s}u^i\right|_{\partial B\times 0}.$$

Thus, by parabolic regularity, $v^i$ belong (and are uniformly bounded) in $C^{3+\epsilon,\frac{3+\epsilon}{2}}(B\times[0,\tau])$. This shows that $\varphi_s$ is uniformly bounded in $C^{3+\epsilon,\frac{3+\epsilon}{2}}(B\times[\tau',\tau])$ for any $0<\tau'<\tau$. Similarly, one can show that $\varphi_s$ is in fact smooth for $s\geq\tau'$.

Now, let $r \leq \min(r',\tau^{1/2})$ . Since $\partial_k \varphi^i_s=\frac{1}{s}\partial_k v^i_s+\frac{s-1}{s}\delta^i_k$,  by the definition of the norm $|\cdot|^*_{2+\epsilon,r,\eta}$, and the uniform bounds mentioned above, there exists a $C=C(g(\cdot),\gamma(\cdot),r')>0$ such that
$$| \partial \varphi |^*_{2+\epsilon,r,\eta} \leq \frac{C}{\eta^4}.$$ 

Hence $\hat g(s)=(\varphi_s)_*g(\bar t -r^2+s)$ satisfies
$$\sup_{\eta\in(0,1)}\eta^8 |\hat g|^*_{2+\epsilon,r,\eta}<C.$$

Now set, for $s\in[0,r^2]$,
$$\Omega_s=(\varphi_s\circ u)^{-1}(B(0,r)^+)$$ 
$$\phi_s=\varphi_s\circ u|_{\Omega_s}.$$
to construct the required parabolic coordinates $(\Omega_s,\phi_s)$ at $(\bar x,\bar t)$. Moreover, by the uniform bounds above choosing $r$ small enough we can achieve (\ref{prad1})-(\ref{prad3}).

\end{proof}
\begin{remark}
The quantity in (\ref{prad3}) in our case  is only there to deal with the lack of higher order compatibility at the boundary, while solving the harmonic map heat flow.  
\end{remark}

\begin{remark}
We may define parabolic coordinates around interior points in a similar way. Then, instead of the bound (\ref{prad3}) we would simply use the scale invariant $C^{2+\epsilon,\frac{2+\epsilon}{2}}$ norm. However, we will not need to do this, as Shi's local estimates provide higher order estimates of the curvature in the interior. 
\end{remark}

In analogy to the elliptic setting where one has the concept of the harmonic radius, introduced in \cite{and90}, we give the following definition.

\begin{definition}[Parabolic radius]
Consider a Ricci flow $(g(t),\gamma(t))$ on $M$ with $\Lambda$-controlled boundary and let $(\bar x,\bar t)\in \partial M\times (a,b]$. We define the parabolic radius $r^{Q}_p(\bar x,\bar t)$  to be the maximal $r>0$ for which there exist parabolic coordinates $(\Omega_s,\phi_s)$ of radius $r$ based at $(\bar x,\bar t)$  such that the bounds (\ref{prad1})-(\ref{prad3}) hold.
\end{definition}

\subsection{Curvature bounds.}
In the following lemma we show that on a Ricci flow with $\Lambda$-controlled boundary a lower bound of the parabolic radius yields bounds on derivatives of the ambient curvature $\Rm$ and the second fundamental form $\mathcal A$ of the boundary.

\begin{lemma}\label{bounds}
There exists an $\alpha=\alpha(Q)\in (0,1)$ such that if $(M,g(t),\gamma(t))$ is a Ricci flow with $\Lambda$-controlled boundary in $(a,b]$ and $(\bar x,\bar t)\in \partial M\times (a,b]$
then, for any $\eta\in(0,1)$ and $0\leq j\leq m-2$, the estimates
$$|\nabla^j \Rm(g(t))|_{g(t)}\leq C(Q,\Lambda,l,j,\eta)r_{p}^{-(j+2)}$$
and
$$|\nabla^{j+1}\mathcal A(g(t))|_{g(t)^T}\leq  C(Q,\Lambda,l,j,\eta)r_{p}^{-(j+2)},$$
hold for each $t\in  \left[\bar t -\left(\eta r_p\right)^2,\bar t\right]$ in $B_{g(t)}(\bar x,\alpha r_p)$, where $B_{g(t)}(\bar x,r)$ denotes a $g(t)$-distance ball in $M$ centered at $\bar x$ and $r_p=r^Q_{p}(\bar x,\bar t)$, the parabolic radius at $(\bar x,\bar t)$.
\end{lemma}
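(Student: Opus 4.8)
The plan is to work entirely in the parabolic coordinates $(\Omega_s,\phi_s)$ of radius $r_p$ based at $(\bar x,\bar t)$, where the push-forward flow $\hat g(s)=(\phi_s)_*g(\bar t-r_p^2+s)$ satisfies the Ricci--DeTurck equation and the a priori bounds \eqref{prad1}--\eqref{prad3}. The key point is that the Ricci--DeTurck flow is a (quasilinear, uniformly) parabolic system for $\hat g_{ij}$, with the Dirichlet-type boundary data supplied by the $\Lambda$-controlled-boundary hypothesis: on $\partial M$, the conformal class representative $\gamma(\bar t)$ and the mean curvature $\mathcal H(g(\cdot))$ are controlled in $|\cdot|^*_{l,r}$ and $|\cdot|^*_{l-1,r}$ respectively, and this translates (via the definition of parabolic coordinates, in which the boundary map is $\gamma(\bar t)$-harmonic and $s$-independent) into boundary control of $\hat g$ of order $l$ and of its mean curvature of order $l-1$. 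So I expect to set up an initial--boundary value problem for $\hat g$ on, say, $B(0,\tfrac34 r_p)^+\times[(\tfrac12\eta r_p)^2, r_p^2]$, with the initial slice controlled in $C^{2+\epsilon}$ by \eqref{prad3} (after absorbing the $\eta^{-8}$ into constants depending on $\eta$), and the lateral boundary pieces controlled by the $\Lambda$-hypothesis.

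The core of the argument is a standard parabolic bootstrap. First, rescale so that $r_p=1$; then \eqref{prad1} gives uniform ellipticity and \eqref{prad2}, \eqref{prad3} give a starting $C^{2+\epsilon,(2+\epsilon)/2}$ bound on $\hat g$ on a slightly smaller parabolic cylinder with time cut off at level $\sim(\eta/2)^2$. Applying Solonnikov-type interior-and-boundary Schauder estimates for linear parabolic systems (viewing the Ricci--DeTurck equation as linear with the already-controlled coefficients) repeatedly, raising the regularity by two derivatives each time and shrinking the spatial domain slightly each time, I get $\hat g\in C^{m+\epsilon,(m+\epsilon)/2}$ on $B(0,\alpha)^+\times[(\eta)^2,1]$ for a suitable $\alpha=\alpha(Q)\in(0,1)$, with bounds $C(Q,\Lambda,l,j,\eta)$. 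At the boundary, the bootstrap uses the boundary conditions on $\gamma$ and $\mathcal H$: since the tangential part of $\hat g$ has a prescribed conformal representative and the mean curvature is prescribed, these furnish enough boundary data (together with the DeTurck gauge condition, which fixes the ``missing'' components) to run boundary Schauder estimates. The curvature $\Rm(\hat g)$ and its covariant derivatives up to order $j$ are universal polynomial expressions in $\hat g_{ij}$, $\hat g^{ij}$ and the coordinate derivatives $\partial^k\hat g$ up to $k=j+2$, so the $C^{m+\epsilon}$ bound on $\hat g$ yields $|\nabla^j\Rm|\le C$ for $j\le m-2$ in these coordinates; similarly, $\mathcal A$ is expressed via $\partial\hat g$ and the unit normal, and $\nabla^{j+1}\mathcal A$ needs $\partial^k\hat g$ up to $k=j+2$, giving the second estimate. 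Unwinding the rescaling reinserts the factor $r_p^{-(j+2)}$.

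There are two bookkeeping points to handle carefully. One is passing from the coordinate (Euclidean-frame) estimates on $\partial^k\hat g$ back to the geometric quantities $|\nabla^j\Rm|_{g(t)}$ and $|\nabla^{j+1}\mathcal A|_{g^T(t)}$: this is routine given \eqref{prad1}, which makes $\hat g$ and $\delta$ uniformly comparable, so norms measured in either metric differ by a factor $C(Q)$. The other is the domain matching: the ball $B_{g(t)}(\bar x,\alpha r_p)$ in $M$ must be contained in the image of $\Omega_s$ under $\phi_s$ for all $s$ in the relevant time range; this follows by choosing $\alpha=\alpha(Q)$ small enough, again using \eqref{prad1} to compare $g(t)$-distance with Euclidean distance and using that $\phi_s$ differs from $\phi_0$ in a controlled way (the harmonic-map-heat-flow estimates behind the existence lemma give a uniform $C^{1+\epsilon}$ bound on $\phi_s$).

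The main obstacle, and the step I would spend the most care on, is the boundary Schauder estimate itself: verifying that the pair (prescribed conformal class of $g^T$, prescribed mean curvature), expressed in the parabolic coordinates and combined with the DeTurck gauge condition, constitutes a boundary value problem for the Ricci--DeTurck system that satisfies the complementing (Lopatinskii--Shapiro) condition, so that Solonnikov's estimates from \cite{MR0211083} apply at every order of the bootstrap. This is essentially the parabolic counterpart of Anderson's observation that in harmonic coordinates the boundary conditions become elliptic; here one must check it stays well-posed after the DeTurck modification, uniformly in the data. Once that is in place, the rest is the mechanical induction on the number of derivatives and the standard translation between coordinate and covariant norms.
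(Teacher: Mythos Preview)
Your proposal is correct and follows essentially the same approach as the paper: rescale to $r_p=1$, pass to parabolic coordinates where $\hat g$ solves the Ricci--DeTurck system with the boundary conditions $\mathcal W(\hat g)=0$, $\mathcal H(\hat g)=\eta$, $[\hat g^T]=[\gamma]$, invoke Solonnikov's regularity theory to get $C^{l,l/2}$ control on a smaller half-ball for positive times, and then read off the curvature and second fundamental form bounds, using \eqref{prad1} to match the Euclidean half-ball with a $g(t)$-metric ball. The only notable differences are technical: the paper extends via cutoffs to all of $\mathbb R^{n+1}_+$ and applies Solonnikov once (rather than your domain-shrinking bootstrap), and it disposes of the complementing condition you flag as the main obstacle simply by citing \cite{gianniotis}, where that verification was carried out.
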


\begin{proof}
We will assume that $\bar t=0$ and supress the reference to $(\bar x,0)$ and $Q$.  Moreover, by rescaling we may also assume that $r_p=1$.

The proof is essentially an application of parabolic regularity theory. Since $r_p=1$, there are parabolic coordinates $(\Omega_s,\phi_s)$ based at $(\bar x,0)$ which define a flow $\hat g(s)=(\phi_s)_*g(s-1)$ on $B(0,1)^+$. Setting $\mathcal W(\hat g)^i=\hat g^{pq}\widehat \Gamma^i_{pq}$, $\hat g(s)$ satisfies the Ricci-DeTurck equation
$$\partial_s \hat g=-2\Ric(\hat g)+\mathcal L_{\mathcal W(\hat g)}\hat g,$$
in $B(0,1)^+$ and 
\begin{eqnarray}
\mathcal W(\hat g(s))&=&0,\nonumber\\
\mathcal H(\hat g(s))&=&\eta(u^1,\ldots,u^n,s),\nonumber\\
\hat g(s)^T-\frac{\textrm{tr}_{\gamma}\hat g(s)^T}{n}\gamma(s)&=&0,\nonumber
\end{eqnarray}
on $B(0,1)^+ \cap \{x^0=0\}$, $s\in (0,1]$.

By the assumption on the parabolic radius $\hat g$ satisfies the estimates (\ref{prad1})-(\ref{prad3}), for $r=1$. Moreover, in the coordinates $\phi_s|_{\partial M}$, $\gamma$ and $\mathcal H$ satisfy bounds
\begin{eqnarray}
\Lambda^{-2} Q^{-1}\delta\leq &\gamma& \leq \Lambda^2 Q \delta,\nonumber\\
|\gamma_{ij}|^*_{l,1}&\leq& C(Q,\Lambda),\nonumber\\
|\mathcal H|^*_{l-1,1} &\leq& C(Q,\Lambda).\nonumber
\end{eqnarray}

Thus, by parabolic regularity, for any $\eta\in(0,1)$ we obtain uniform $C^{l,l/2}$ estimates of $\hat g(s)$ on $B(0,\alpha')^+\times [1-\eta^2,1]$  for some $\alpha'(Q)\in (0,1)$. Namely,  we use cutoff functions to extend the Ricci DeTurck flow on $B(0,1)^+$  to a boundary value problem on the full $\mathbb R^{n+1}_+$, as is done in \cite{gianniotis}. This brings us exactly to the setting of \cite{MR0211083}. Note that the argument in \cite{gianniotis} requires  $\hat g(0)_{ij}=\delta_{ij}$ at $u=0$. However, we can achieve this by applying a linear transformation, which will be controlled in terms of $Q$, by (\ref{prad1}).

The higher order estimates of the curvature and the second fundamental form of $\hat g(s)$ on $B(0,\alpha')^+$ follow immediately. Namely, we obtain
$$|\widehat\nabla^j \Rm(\hat g(s))|_{\hat g(s)}\leq C(Q,\Lambda,l,j,\eta)$$
and
$$|\widehat\nabla^{j+1}\mathcal A(\hat g(s))|_{\hat g(s)^T}\leq  C(Q,\Lambda,l,j,\eta),$$
for $0\leq j\leq m-2$ and  $s\in [1-\eta^2,1]$. 

Now,  (\ref{prad1}) implies that $B_{\hat g(s)}(0,\alpha'Q^{-1/2})^+\subset B(0,\alpha')^+$. On the other hand, $B_{\hat g(s)}(0,\alpha'Q^{-1/2})^+$ is isometric to the $g(s-1)$-distance ball $B_{g(s-1)}(\bar x,\alpha'Q^{-1/2})$ via $\phi_s$, which means that for any $t\in [-\eta^2,0]$, $x\in B_{g(t)}(\bar x,\alpha'Q^{-1/2})$ and $x'\in B_{g(t)}(\bar x,\alpha'Q^{-1/2})\cap \partial M$ one has the estimates
\begin{eqnarray}
|\nabla^j \Rm( g(t))(x,t)|_{g(t)}&\leq& C(Q,\Lambda,l,j,\eta),\nonumber\\
|\nabla^{j+1}\mathcal A(g(t))(x',t)|_{g^T(t)}&\leq & C(Q,\Lambda,l,j,\eta).\nonumber
\end{eqnarray}

Setting $\alpha=\alpha'Q^{-1/2}$ we obtain the statement of the lemma.
\end{proof}

\subsection{Compactness.} Since we will need to deal with metrically incomplete manifolds in the following sections, we define below a notion of Cheeger-Gromov convergence suitable for this setting. Then, we prove a compactness theorem which will allow us to extract limits of sequences of incompete Ricci flows with boundary. 

\begin{definition}[Cheeger-Gromov convergence-weak form]\label{CGweak}
Let $(M_k,g_k(t),x_k)$, $(M_\infty,g_\infty(t), x_\infty)$ be, possibly incomplete, Ricci flows on pointed manifolds with boundary $M_k, M_\infty$, $t\in (a,b]$. Let $\gamma_k(t),\gamma_\infty(t)$ be one parameter families of metrics on $\partial M_k,\partial M_\infty$ such that $[g_k^T(t)]=[\gamma_k(t)]$ and  $[g_\infty^T(t)]=[\gamma_\infty(t)]$ for every $t\in (a,b]$.

We will say that $(M_k,g_k(t),\gamma_k(t),x_k)$ converge in the pointed $C^m$ Cheeger-Gromov sense to  \linebreak $(M_\infty,g_\infty(t),\gamma_\infty(t), x_\infty)$  if there exists an exhaustion $\{K_k\}$ of $M_\infty$ with compact sets and $C^{m+1}$ diffeomorphisms $F_k: K_k \rightarrow F_k(K_k)\subset M_k$ such that
\begin{enumerate}
\item $F_k(x_\infty)=x_k$,
\item $F_k|_{K_k\cap \partial M_\infty}:K_k\cap \partial M_\infty\rightarrow F_k(K_k\cap \partial M_\infty)\subset \partial M_k$ is a diffeomorphism,
\item $F_k^* g_k(t)\rightarrow g_\infty(t)$ smoothly and locally in $M_\infty\times (a,b]$ in $C^m$,
\item $F_k^* \gamma_k(t) \rightarrow \gamma_\infty(t)$ smoothly and locally in $\partial M_\infty\times (a,b]$ in $C^m$.
\end{enumerate}
\end{definition}
\begin{remark}
The $C^m$ topology above refers to objects with $q$ continuous derivatives in space and $\tau$ in time, for any $q,\tau$ with $2\tau + q\leq m$.
\end{remark}
\begin{notation}\label{converge}
We will adopt the notation $(M_k,g_k(t),\gamma_k(t),x_k)\rightharpoonup (M_\infty,g_\infty(t),\gamma_\infty(t),x_\infty)$ for such convergence.
\end{notation}

\begin{theorem}[Compactness theorem]\label{comp1}
Let $(M_k,g_k(t),\gamma_k(t),x_k)$ be a sequence of pointed Ricci flows with $\Lambda$-controlled boundary, $t\in (a,b]$. Suppose that for all $k$
\begin{enumerate}
\item $|\Rm(g_k(t))|\leq K$ in $M_k\times  (a,b]$, 
\item $i_{b,loc,g_k(0)}(x_k)\geq 4i_0$,   
\item $r_{p,g_k}(x,t)\geq \zeta\left(\frac{D_{g_k}(x,t)}{\dist_{g_k^T(t)}(x,x_k)}\right)$  for all $(x,t)\in\partial M_k\times  (a,b]$,
\end{enumerate}
for some $K, i_0>0$ and a nondecreasing continuous function $\zeta$ with $\zeta(0)=0$, positive away from $0$. 

Then there is a Ricci flow on a  pointed manifold with boundary $(M_\infty,g_\infty(t), x_\infty)$ and a one parameter family of metrics $\gamma_\infty(t)$ on $\partial M_\infty$ such that
\begin{enumerate}
\item Up to subsequence, $(M_k,g_k(t),\gamma_k(t),x_k)\rightharpoonup (M_\infty,g_\infty(t),\gamma_\infty(t),x_\infty)$ in the $C^{m-3}$ topology.
\item If $i_{b,loc,g_k(0)}(x_k)\rightarrow \infty$ then $(M_\infty,g_\infty(t))$ is  complete and noncompact.
\end{enumerate}
\end{theorem}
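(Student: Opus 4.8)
\textbf{Proof plan for Theorem \ref{comp1}.}

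The plan is to run the standard Cheeger--Gromov--Hamilton compactness machinery, but organized around the parabolic radius exactly as Anderson's scheme in \cite{and90} organizes the elliptic compactness around the harmonic radius. First I would use hypotheses (2) and (3), together with the curvature bound (1) and Shi's estimates (Theorem 1.1) in the interior, to produce uniform geometry on a fixed-size region: the collar bound $i_{b,loc,g_k(0)}(x_k)\geq 4i_0$ gives a definite half-ball neighborhood $B^\partial_{g_k(0)}(x_k,i_0)\times[0,i_0)$ of the boundary in which $g_k(0)$ is controlled, and the boundary injectivity radius bound $\inj_{g_k^T(0)}(x_k)\geq i_0$ controls $g_k^T(0)$. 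Combined with hypothesis (4), which says the parabolic radius does not degenerate too fast relative to the distance to the basepoint, Lemma \ref{bounds} converts this into uniform $C^{m-2}$-type bounds on $\Rm(g_k(t))$ and $\nabla^{j+1}\mathcal A(g_k(t))$ on metric balls of a definite radius around each point at a definite distance from $x_k$, for $t$ in a definite time-interval before $b$.

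The second step is to build the diffeomorphisms $F_k$. Near interior points one extracts them from the interior harmonic-coordinate/parabolic-coordinate charts together with Shi's estimates in the usual way; near the boundary one uses the parabolic coordinate charts $(\Omega_s,\phi_s)$ supplied by the definition of the parabolic radius and the lower bound (4). Because the parabolic radius is bounded below on compact subsets of $M_\infty$ (once $M_\infty$ is constructed as the limit), the charts have definite size and uniformly bounded transition maps, so a partition-of-unity/patching argument (as in Hamilton \cite{hamcomp}, adapted to manifolds with boundary as in \cite{AndTaylor,knox_compactness}) glues them into $C^{m+1}$ embeddings $F_k:K_k\to M_k$ respecting the boundary, i.e. satisfying (1) and (2) of Definition \ref{CGweak}. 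The uniform bounds on $g_k$, $\gamma_k$ and their time-derivatives in these charts, via Arzel\`a--Ascoli, give subsequential convergence $F_k^*g_k(t)\to g_\infty(t)$ and $F_k^*\gamma_k(t)\to\gamma_\infty(t)$ in $C^{m-3}$ locally in space-time; the limit $g_\infty(t)$ solves Ricci flow because each $g_k(t)$ does and the equation is preserved under $C^2$-convergence, and $[g_\infty^T(t)]=[\gamma_\infty(t)]$ passes to the limit from the conformality in each chart. Here the loss from $C^{m}$ to $C^{m-3}$ is the usual price of the diagonal argument plus the Arzel\`a--Ascoli compactness. This establishes conclusion (1).

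For conclusion (2), suppose $i_{b,loc,g_k(0)}(x_k)\to\infty$. Then the collar neighborhoods of the boundary on which one has control grow without bound, so the exhaustion $\{K_k\}$ can be taken to sweep out an $M_\infty$ whose boundary is metrically complete in $g_\infty^T$; moreover the curvature bound (1) plus hypothesis (4) (which then forces $r_{p,g_k}(x,t)$ to be bounded below by a fixed positive constant on balls of fixed radius, since $D_{g_k}/\dist$ does not go to $0$ on such balls) gives uniform control on balls of radius comparable to $\dist_{g_k}(x,x_k)$, so the interior is complete as well; completeness of $M_\infty$ then follows by the standard argument that a metric ball of any radius around $x_\infty$ is contained in some $K_k$. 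Noncompactness is immediate: a compact manifold with boundary has finite boundary injectivity radius, contradicting $i_{b,loc,g_k(0)}(x_k)\to\infty$ once $g_\infty$ has been identified as a limit (alternatively, the diameter of $M_\infty$ is infinite because the collar alone has infinite extent).

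\textbf{Main obstacle.} The delicate point, and the one I would spend the most care on, is the patching near the boundary: the parabolic coordinate charts are time-dependent ($\phi_s$ evolves by a harmonic map heat flow with Dirichlet boundary data), so one must check that the transition maps between overlapping charts are controlled \emph{uniformly in $s$} and respect the boundary for all $s$ simultaneously, and that the weighted bound (\ref{prad3}) — which degenerates like $\eta^{-8}$ as $s\to 0$ — still yields enough uniform control on a time-interval bounded away from the initial time to run Arzel\`a--Ascoli. This is where hypotheses (2)--(4) must be combined most carefully, and where the adaptation of Hamilton's gluing argument to the parabolic, boundary setting is genuinely new rather than routine.
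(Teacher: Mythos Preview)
Your overall strategy is sound, but the paper takes a cleaner route that completely sidesteps the obstacle you flag as the delicate point. You propose to build the diffeomorphisms $F_k$ directly out of the time-dependent parabolic coordinate charts $(\Omega_s,\phi_s)$ and then worry about patching them uniformly in $s$; the paper never does this. Instead, the parabolic radius lower bound (hypothesis (4)) is used \emph{only} through Lemma~\ref{bounds}, to extract coordinate-independent geometric bounds: uniform control of $|\nabla^j\Rm(g_k(0))|$ and $|\nabla^{j+1}\mathcal A(g_k(0))|$ up to the boundary. With these in hand, the construction of $F_k$ is purely \emph{elliptic} at the fixed time $t=0$: interior injectivity radius is controlled via the volume lower bound of Lemma~\ref{intinj} and \cite{CGT}; the $C^{m-1,\epsilon}$ harmonic radius of interior points is controlled by \cite{and90} plus elliptic regularity; and the $C^{m-1,\epsilon}$ boundary harmonic radius is controlled by the arguments of \cite{AndTaylor} combined with Schauder estimates (using the derivative bounds on $\mathcal A$ just obtained). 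One then covers $\overline{C_k(x_k,i_0)}$ by time-independent harmonic coordinates of uniform size and invokes standard Cheeger--Gromov compactness for incomplete manifolds (e.g.\ \cite{Kasue}, \cite{And89}) to get the limit at $t=0$. Only after $F_k$ exists does one invoke the spacetime curvature bounds and Arzel\`a--Ascoli, as in \cite{hamcomp}, to upgrade to convergence of the flow.

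The practical upshot: your ``main obstacle'' --- controlling time-dependent transition maps between parabolic charts, and coping with the $\eta^{-8}$ degeneration in (\ref{prad3}) --- simply does not arise in the paper's argument, because the parabolic charts are never used as an atlas. They are a device for producing curvature and second-fundamental-form estimates, after which one reverts to the well-oiled elliptic machinery. Your approach might be made to work, but it would require genuinely new gluing arguments; the paper's route is both shorter and relies only on existing literature. One further point you should add: the convergence $F_k^*\gamma_k(t)\to\gamma_\infty(t)$ is not automatic from the convergence of $g_k$ and requires a separate argument --- the paper controls $\gamma_k$ in $g_k^T(0)$-harmonic coordinates by bounding the transition maps between $g^T(0)$- and $\gamma(0)$-harmonic coordinates via Schauder estimates and the $\Lambda$-comparability in Definition~\ref{controlled_data}.
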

To prove the compactness result, we will first need the following lemma. Suppose $(N,h)$ is a Riemannian manifold with boundary, possibly incomplete. Lemma \ref{intinj} essentially reduces the control of the interior injectivity radius of $N$ to that of its boundary, at least finite distance away from it (see also \cite{knox_compactness}).

For $0<r'<r<i_{b,loc,h}(x)$, denote by $A(x,r',r)$ the image of $B^\partial_h(x,r)\times \{s\nu, s\in [r',r)\}$ under the normal exponential map of the boundary, where $\nu$ is the inward pointing unit normal.

\begin{lemma}\label{intinj}
For any $y\in A(x,r',r)$ there exist positive constants $\kappa$ and $\rho_0$, depending on $i_{b,loc,h}(x)$, $r',r$, a lower bound on $vol(B^\partial_h(x, i_{b,loc,h}(x)))$, bounds of $|Rm(h)|$ in $A(x,0,i_{b,loc,h}(x))$ and $|\mathcal A(h)|$ on $B^\partial_h(x,i_{b,loc,h}(x))$,  such that
$$\frac{vol(B_h(y,\rho))}{\rho^{n+1}}\geq \kappa,$$
for all $\rho\leq \rho_0$.
\end{lemma}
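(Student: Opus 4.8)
The plan is to obtain the volume ratio lower bound at the point $y\in A(x,r',r)$ by a standard packing argument, once we have a lower bound on the injectivity radius at $y$ (or at least on a slightly smaller scale), and the latter we extract from the boundary data by exploiting the collar structure. First I would fix $y=\expo_p^\perp(s_0\nu)$ with $p\in B^\partial_h(x,r)$ and $s_0\in[r',r)$, and observe that inside $A(x,0,i_{b,loc,h}(x))$ the metric $h$ is, in Fermi (boundary normal) coordinates, of the form $ds^2+h_s$ where $h_s$ is a family of metrics on $\partial M$; the curvature bound in $A(x,0,i_{b,loc,h}(x))$ together with the second fundamental form bound on the boundary controls, via the Riccati/Jacobi equation along the normal geodesics, how much $h_s$ degenerates as $s$ ranges over $[0,i_{b,loc,h}(x))$. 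In particular, on the sub-collar $s\in[r'/2, r]$ the metric $h$ is uniformly comparable (with constants depending only on the listed quantities) to the fixed product metric $ds^2+h^T$ on $B^\partial_h(x,r)\times[r'/2,r]$, and the volume lower bound $\mathrm{vol}(B^\partial_h(x,i_{b,loc,h}(x)))$ together with the curvature bound on the boundary (Bishop--Gromov on $\partial M$) gives a lower bound on $\mathrm{vol}_{h^T}(B^\partial_h(x,r))$, hence a lower bound on $\mathrm{vol}_h$ of the whole sub-collar region.

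Next I would upgrade this to a volume lower bound for balls centered at $y$. Choose $\rho_0 = \tfrac{1}{4}\min\{r', i_{b,loc,h}(x)-r, \text{(injectivity-radius-type scale)}\}$ small enough that the metric ball $B_h(y,\rho)$ for $\rho\le\rho_0$ stays inside the good sub-collar $A(x,r'/2,r)$ and, crucially, does not touch $\partial M$. Inside this region $|\Rm(h)|$ is bounded, so by comparison geometry (Günther's inequality) a lower volume bound $\mathrm{vol}(B_h(y,\rho))\ge \kappa\rho^{n+1}$ will follow \emph{provided} $\rho$ is no larger than the injectivity radius $\inj_{M,h}(y)$. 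So the remaining issue is exactly to bound $\inj_{M,h}(y)$ from below. For this I would use the classical Cheeger-type argument: a curvature bound plus a lower volume bound on a fixed-size ball around $y$ yields a lower injectivity radius bound at $y$. The fixed-size volume bound around $y$ is precisely what the collar comparison of the previous paragraph supplies — any ball $B_h(y,c)$ with $c$ a definite fraction of $\min\{r',\, i_{b,loc,h}(x)-r\}$ contains a definite portion of the good sub-collar and hence has volume bounded below. Cheeger's lemma (see also the local versions used for manifolds with boundary, e.g. the argument in \cite{knox_compactness}) then gives $\inj_{M,h}(y)\ge \rho_0$ after possibly shrinking $\rho_0$, all constants depending only on the listed data.

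The main obstacle I anticipate is the interplay with the boundary: $y$ is only a finite (and possibly small, of order $r'$) distance from $\partial M$, so one must be careful that the ball $B_h(y,\rho)$ stays away from $\partial M$ — otherwise geodesics can hit the boundary and the usual no-boundary injectivity-radius estimates fail. This is handled by taking $\rho_0$ strictly smaller than $r'/2$ (and than the collar size remaining beyond $r$), which keeps everything in the region where $h$ is a genuine comparison-geometry metric with two-sided curvature control and no boundary effects; the price is only that $\kappa,\rho_0$ depend on $r'$, which is allowed by the statement. A secondary technical point is making the Fermi-coordinate comparison quantitative: one integrates the Riccati equation for the shape operator of the level sets $\{s=\text{const}\}$ starting from the bound $|\mathcal{A}|\le$ (given) on $\{s=0\}$ and using $|\Rm|\le$ (given) throughout, which keeps the shape operator, and hence the distortion of $h_s$ relative to $h^T$, bounded on the compact $s$-interval $[0,r]\subset[0,i_{b,loc,h}(x))$; this is routine but needs the collar to have definite size beyond $r$, which is exactly why the hypothesis is $r<i_{b,loc,h}(x)$ with room to spare.
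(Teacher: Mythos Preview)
Your approach is correct but more circuitous than the paper's. Both start from the same key ingredient, the Fermi-coordinate comparison $C^{-1}\bar h \leq h \leq C\bar h$ with the product metric $\bar h = ds^2 + h_0$, but the paper then extracts the volume ratio bound \emph{directly} from this bi-Lipschitz equivalence: $h$-balls contain $\bar h$-balls of comparable radius and $h$-volumes are comparable to $\bar h$-volumes, and for the product metric the small-ball volume is elementary to bound below using the boundary volume ratio $v_0$. (The paper also first reduces, via Bishop--Gromov volume comparison, to the single reference point $\bar y = \expo_x\bigl(\tfrac{1}{2}\, i_{b,loc,h}(x)\,\nu\bigr)$ midway through the collar.) Your detour through Cheeger's lemma and then G\"unther's inequality works, but is logically redundant: the fixed-scale volume lower bound you feed into Cheeger already contains all the information needed for the conclusion, and converting it into an injectivity radius only to convert back into a volume bound via G\"unther gains nothing while introducing the boundary-proximity technicalities you yourself flag. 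The paper's route is shorter and sidesteps those issues entirely.
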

\begin{proof}
It suffices to prove the statement for $\bar y= \expo_{x}(\frac{i_{b,loc,h(x)}}{2} \nu)$ since the full statement follows by volume comparison. 

In appropriate coordinates on $A(x,0,r)$ we have $h=dr^2+ h_r$, where $h_r$ are metrics on $B^\partial _h(x,i_{b,loc,h}(x))$, $r$ denoting the distance from $\partial N$. Setting $\bar h=dr^2+h_0$, the bounds for the curvature and the second fundamental form provide a uniform $C>1$ such that $C^{-1}\bar h\leq h\leq C \bar h$. Hence, there is a  $\rho_0=\rho_0(C)$ such that, for all $0<\rho\leq\rho_0$,  $B_{\bar y}(\rho)\subset A(x,0,r)$ and
$$\frac{vol_h(B_h(\bar y,\rho))}{\rho^{n+1}}\geq \kappa(C,v_0),$$
where $\frac{vol_{h^T}(B^\partial _h(x,\rho))}{\rho^n}\geq v_0$ for all $0<\rho<i_{b,loc,h}(x)$.
\end{proof}

\begin{proof}[Proof of Theorem \ref{comp1}]
The manifolds involved are incomplete and at this stage we only need the existence of a limit with the properties of the theorem. Thus we will make the assumption that each $M_k$ is just the image $C_k(x_k,4 i_0)$ of $B^\partial_{g_k(0)}(x_k,4 i_0)\times  \{s\nu, s\in [0,4 i_0)\}$ under the normal exponential map with respect to $g_k(0)$. Namely, we denote $C(x,r)=A(x,0,r)$.

Then, the assumption on the parabolic radius implies a uniform bound on the second fundamental form $\mathcal A(g_k(0))$ on $B^\partial_{g_k(0)}(x_k,3 i_0)$. This, together with  assumptions (1) and (3), controls $g_k(0)$ in $C_k(x_k,3 i_0)$ in the $C^0$ sense (as in the proof of Lemma \ref{intinj}) thus the sets $C_k(x_k,2i_0)$ remain a uniform distance away from $\overline{C_k(x_k,3 i_0)}\setminus C_k(x_k,3 i_0)^o$. 

To obtain higher order control in the interior, note that by Shi's estimates we obtain uniform bounds on derivatives of $\Rm(g_k(0))$ in $C_k(x_k,2i_0)$. Also, by Lemma \ref{intinj} and the $\Lambda$-controlled assumption there exist $v_0,c_0>0$ such that for each $y\in C_k(x_k,2i_0) $ and $r< c_0 \dist_{g_k(0)} (y, B^\partial_{g_k}(x_k, 2i_0))$ 
\begin{eqnarray}
 vol_{g_k(0)} (B_{g_k(0)}(y, r)) \geq v_0 r^{n+1}. 
\end{eqnarray}
The volume ratio bound and the  bound on the curvature suffice for the blow-up argument in \cite{and90} to go through. Hence, the uniform bounds on the derivatives of the curvature and elliptic regularity imply that the $C^{m-1,\epsilon}$ harmonic radius of the interior points of $C_k(x_k,i_0)$ is also controlled, in terms of their $g_k(0)$-distance from $\partial M_k$.

On the other hand, near the boundary, the work in \cite{AndTaylor} implies that the $C^{1,\epsilon}$ boundary harmonic radius of points on the boundary is also uniformly bounded below. Moreover, the assumption on the parabolic radius and Lemma \ref{bounds} provides bounds on derivatives of $\Rm(g_k(0))$ and $\mathcal A(g_k(0))$ up to the boundary. These bounds combined with Schauder estimates in the arguments of \cite{AndTaylor} control the $C^{m-1,\epsilon}$ harmonic radius of points on $C_k(x_k,i_0)\cap \partial M_k$.



The discussion above shows that $\overline{C_k(x_k,i_0)}$ can be covered by harmonic coordinates of \linebreak controlled size in which $g_k(0)$ is $C^{m-1,\epsilon}$-controlled. Hence, standard arguments imply that \linebreak $(C_k(x_k,i_0), g_k(0) ,x_k)$ converge, up to subsequence, to an incomplete limit $(M_\infty,x_\infty,g_\infty(0))$  in the $C^{m-1}$ Cheeger-Gromov sense of Definition \ref{CGweak}.  For instance, see \cite{Kasue} and Section 2.6 of \cite{And89} for sequences of incomplete manifolds.

Then, using the curvature bounds of Lemma \ref{bounds} and Arzel\`{a}-Ascoli as in \cite{hamcomp}, we obtain that $F_k^*g_k(t)\rightarrow g_\infty(t)$ in the $C^{m-3}$ topology.

If $i_{b,loc,g_k(0)}(x_k)\rightarrow \infty$, take a sequence $r_i\rightarrow \infty$. The argument above shows that for each $i$, $(C_k(x_k,r_i),g_k(\cdot))$  converge as $k\rightarrow \infty$. Taking a diagonal sequence we obtain a noncompact complete limit. 

It remains to show that (4) of Definition \ref{CGweak} holds. First, recall that in the proof of Lemma \ref{bounds} we obtained higher order estimates for $\hat g$ in parabolic coordinates at any $(\bar x,\bar t)\in \partial M\times (a,b]$. These coordinates restrict to $\gamma(\bar t)$-harmonic coordinates around $\bar x$. Hence, since the flow of the DeTurck vector field fixes the boundary, we obtain $C^{l,l/2}$ control of $g^T(\cdot)$ in $\gamma(\bar t)$-harmonic coordinates of uniform size, in the time interval $(\bar t-r_p^2,\bar t]$. 

Note that these estimates hold regardless the time $\bar t$ we choose for the $\gamma(\bar t)$-harmonic coordinates, since $\gamma$ is controlled in $C^{l,l/2}$ (in time intervals $[c,d]\subset (a,b]$). Hence we have uniform $C^{m,\epsilon}$ control of $g^T(0)$ in $\gamma(0)$-harmonic coordinates.

By Schauder estimates and (1) of the Definition \ref{controlled_data}, the transition functions from $g^T(0)$-harmonic coordinates to $\gamma(0)$-harmonic coordinates are $C^{m+1,\epsilon}$-controlled. Therefore, $\gamma(0)$ is $C^{m,\epsilon}$-controlled in $g^T(0)$-harmonic coordinates as well.  Finally, by Definition \ref{controlled_data} we obtain $C^{l,l/2}$ control of $\gamma(\cdot)$ in the same coordinates.

Applying this to the sequence $(g_k(t),\gamma_k(t))$, and using Arzel\`{a}-Ascoli, we obtain a limit $\gamma_\infty(t)$ which satisfies $[\gamma_\infty(\cdot)]=[g^T_\infty(\cdot)]$. This finishes the proof of the theorem.
\end{proof}

\subsection{Lower semicontinuity of the parabolic radius.}

\begin{lemma}\label{lsemic}
The parabolic radius is lower semicontinuous with respect the Cheeger-Gromov topology in the sense that if $(M_k,g_k(t),\gamma_k(t),x_k)\rightharpoonup (M_\infty,g_\infty(t),\gamma_\infty(t),x_\infty)$ and $1<Q'<Q$ then
$$\liminf_k r^Q_{p,g_k}(x_k,\bar t)\geq r^{Q'}_{p,g_\infty}(x_\infty,\bar t)\quad\textrm{for all $\bar t\in (a,b]$.}$$
\end{lemma}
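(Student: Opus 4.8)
\textbf{Proof plan for Lemma \ref{lsemic}.} The plan is to show that parabolic coordinates on the limit flow, with constant $Q'$, can be pulled back via the diffeomorphisms $F_k$ of the Cheeger--Gromov convergence to produce \emph{approximate} parabolic coordinates on $(M_k,g_k)$, and then to correct them slightly so that they become genuine parabolic coordinates satisfying the bounds (\ref{prad1})--(\ref{prad3}) with the larger constant $Q$, for $k$ large. This gives $r^Q_{p,g_k}(x_k,\bar t)\geq r^{Q'}_{p,g_\infty}(x_\infty,\bar t)-o(1)$, which is the claim.

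First I would fix $\bar t\in(a,b]$ and any $r<r^{Q'}_{p,g_\infty}(x_\infty,\bar t)$, and let $(\Omega^\infty_s,\phi^\infty_s)$ be parabolic coordinates of radius $r$ based at $(x_\infty,\bar t)$ on the limit, so that $\hat g_\infty(s)=(\phi^\infty_s)_*g_\infty(\bar t-r^2+s)$ obeys (\ref{prad1})--(\ref{prad3}) with constant $Q'$. For $k$ large the set $\overline{\Omega^\infty_0}$ (together with its evolution, which stays in a fixed compact set by the $C^0$ bound (\ref{prad1}) and the curvature bound) lies in the domain $K_k$ of $F_k$, so we may transplant the defining data: on $\partial M_k$ we take the $\gamma_k(\bar t)$-harmonic coordinates built from $\gamma_\infty(\bar t)$-harmonic coordinates via $F_k|_{\partial M_\infty}$ (these exist with the required bounds by the $C^m$-closeness $F_k^*\gamma_k\to\gamma_\infty$, using the $\Lambda$-controlled boundary hypothesis and Schauder to pass between $\gamma_k$- and $\gamma_\infty$-harmonic charts — exactly the transition-function argument at the end of the proof of Theorem \ref{comp1}), and in the interior we solve the harmonic map heat flow defining parabolic coordinates for $g_k(\bar t-r^2+s)$ with the \emph{same} boundary values. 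The point is that $g_k$ is $C^{m-3}$-close to $g_\infty$ on the relevant parabolic cylinder, so by continuous dependence of the linear parabolic system $\partial_s\phi_s=\Delta_{g_k(\bar t-r^2+s),\delta}\phi_s$ on its coefficients (the coefficients being $g_k^{ij}$ and $g_k^{ij}\Gamma^m_{ij}$, which converge in $C^{m-5,(m-5)/2}$ or so, together with matching of the initial and boundary data) the solutions $\phi^k_s$ converge to $\phi^\infty_s$ in, say, $C^{m-5,(m-5)/2}$ on slightly shrunken cylinders, hence uniformly in the norms appearing in (\ref{prad1})--(\ref{prad3}).

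Consequently $\hat g_k(s)=(\phi^k_s)_*g_k(\bar t-r^2+s)\to\hat g_\infty(s)$ in the relevant topologies, and since $\hat g_\infty$ satisfies (\ref{prad1})--(\ref{prad3}) with strict constant $Q'<Q$, for $k$ large $\hat g_k$ satisfies the same bounds with constant $Q$. (One small technical point: $\phi^k_s$ maps onto a domain only $C^0$-close to $B(0,r)^+$, not exactly onto it; one fixes this by running the construction at a slightly larger radius $r'=r(1+o(1))$ and restricting, or equivalently by noting the parabolic radius is defined as a supremum so it suffices to realize radii arbitrarily close to $r$.) Hence $r^Q_{p,g_k}(x_k,\bar t)\geq r(1-o(1))$, and letting $k\to\infty$ then $r\uparrow r^{Q'}_{p,g_\infty}(x_\infty,\bar t)$ gives the lemma.

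The main obstacle, and the step requiring the most care, is the boundary: one must verify that pulling back $\gamma_\infty(\bar t)$-harmonic coordinates by $F_k$ and running the interior harmonic map heat flow really produces \emph{admissible} parabolic coordinates for $(M_k,g_k,\gamma_k)$ — i.e. that the boundary traces are genuinely $\gamma_k(\bar t)$-harmonic, not merely close to it — and to quantify the loss in regularity order incurred at each passage ($C^{m-3}$ convergence of metrics, loss of two derivatives through the transition functions, loss of further derivatives through the parabolic system) so that the norms in (\ref{prad1})--(\ref{prad3}), which only involve the fixed finite order $2+\epsilon$, are comfortably controlled. Because those target norms are of low fixed order while the convergence is in $C^{m-3}$ with $m$ large, there is ample room, so the argument goes through; the bookkeeping is the only real work.
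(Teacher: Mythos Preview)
Your plan is correct and follows essentially the same route as the paper: start from the limit's parabolic coordinates with constant $Q'$, correct the boundary trace to be genuinely $\gamma_k(\bar t)$-harmonic by solving a Dirichlet problem (the paper does this explicitly, obtaining $|u_k^i-u^i|_{C^{3,\epsilon}}\to 0$ via Schauder), then solve the harmonic map heat flow for $g_k$ with the corrected boundary and transplanted initial/side data, and use continuous dependence of the linear parabolic system to push the bounds from $Q'$ to $Q$ for large $k$. The paper handles the domain-mismatch issue you flag by introducing a margin $r_\sigma=r_\infty-\sigma$ and a time shift $\tau=r_\infty^2-r_\sigma^2$, which is equivalent to your ``run at slightly larger radius and restrict'' remedy.
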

\begin{proof}
 To simplify notation, let $\bar t=0$ and denote $r_k=r^Q_{p,g_k}(x_k,0)$, $r_\infty=r^{Q'}_{p,g_\infty}(x_\infty,0)$. We will also assume that $g_k(t)$ and $\gamma_k(t)$ are defined on compact subsets $C\subset M_\infty$ and  $C\cap \partial M_\infty$, that $x_k\equiv x_\infty:=\bar x$, and  $g_k(t),\gamma_k(t)$ converge to $g_\infty(t),\gamma_\infty(t)$ smoothly and uniformly in compact subsets of $M_\infty$. 

Let $r_\sigma=r_\infty-\sigma$, for $\sigma>0$ small.  The goal is to construct, for $k$  large, parabolic coordinates $(\Omega_s^k,\phi^k_s)$ of radius $r_\sigma$ at $(\bar x,0)$ for the Ricci flow $(g_k(t),\gamma_k(t))$ such the bounds  (\ref{prad1})-(\ref{prad2}) hold.

Consider $(g_\infty(t),\gamma_\infty(t))$-parabolic coordinates $(\Omega_s,\phi_s)$ of radius $r_\infty$, $\phi_s:\Omega_s\rightarrow B(0,r_\infty)^+$, based at $(\bar x,0)$  in which the bounds (\ref{prad1})-(\ref{prad2}) hold for $Q'<Q$. 

Let $u=(u^0,\ldots,u^n)$ denote the coordinates in $\mathbb R^{n+1}$, where superscripts $1\leq i \leq n$ correspond to the directions along the boundary of the upper-half space $\mathbb R^{n+1}_+$. Note that by the definition of parabolic coordinates, 
\begin{equation}
 \Delta_{\gamma_\infty(0)} u^i =0,\label{gamma_infty_harmonic}
\end{equation}
where we write $\gamma_\infty$ for $(\phi_s)_*\gamma_\infty$, abusing slightly notation.

 Now, let $u_k^1,\ldots,u_k^n$ be solutions to the following Dirichlet problems in  $B(0,r_\sigma)\subset \mathbb R^n$
\begin{eqnarray}
\Delta_{\gamma_k(0)}u_k^i&=&0,\label{gamma_k_harmonic}\\
u_k^i|_{\partial B(0,r_\sigma)}&=&u^i.\nonumber
\end{eqnarray} 
By the convergence of $\gamma_k(0)$ to $\gamma_\infty(0)$ it follows that in the $u$-coordinates $|(\Delta_{\gamma_k}-\Delta_\infty)(f)|_{C^{1,\epsilon}}\rightarrow 0$, for any fixed function $f$ on $B(0,r_\sigma)$.
Moreover, by (\ref{gamma_infty_harmonic}), the functions $u_k^i-u^i$ satisfy 
\begin{eqnarray}
\Delta_{\gamma_k(0)}(u_k^i-u^i)&=&(\Delta_{\gamma_k(0)}-\Delta_{\gamma_\infty(0)})u^i,\nonumber\\
u_k^i-u^i|_{\partial B(0,r_\sigma)}&=&0.\nonumber
\end{eqnarray}
Since $\gamma_k(0)$ are uniformly bounded in $C^{2,\epsilon}$, Schauder estimates imply that 
\begin{equation}
|u_k^i-u^i|_{C^{3,\epsilon}}\rightarrow 0.\label{conv1}
\end{equation}
In particular, $\{u_k^i\}_{i=1,\ldots,n}$ are $\gamma_k(0)$-harmonic coordinates, for large $k$.\\

Define the map $F_k:B(0,r_\infty)^+\rightarrow \mathbb R^{n+1}_+$ as
\begin{equation}
 F_k(u^0,u^1,\ldots,u^n)=(u^0,u_k^1(u^1,\ldots,u^n),\ldots,u_k^n(u^1,\ldots,u^n)).
\end{equation}
Clearly, for any small $\delta>0$ 
\begin{equation}
 B(0,r_\infty-\delta)^+\subset F_k(B(0,r_\infty)^+)\subset B(0,r_\infty+\delta)^+ , \label{squeeze}
\end{equation}
and $F_k$ is a diffeomorphism onto its image for large $k$, by (\ref{conv1}).\\

Now, let $\tau=r_\infty^2-r_\sigma^2$ and for any $k$ and $s\in [0,r_\sigma^2]$ define
\begin{equation}
 \Phi^k_s=F_k\circ \phi_{s+\tau}.
\end{equation}
It follows from (\ref{gamma_k_harmonic}) that the map $\Phi^k_s|_{\partial M_\infty}$ is $\gamma_k(0)$-harmonic.\\

By (\ref{squeeze}) it follows that for large $k$ we can set $\Omega^k_s=(\Phi^k_s)^{-1}(B(0,r_\sigma)^+)\subset \Omega_{s+\tau}$. We wish to construct $\phi^k_s:\Omega^k_s\rightarrow B(0,r_\sigma)^+$ satisfying
\begin{eqnarray}
\frac{\partial}{\partial s} \phi^k_s&=&\Delta_{g_k(s-r_\sigma^2),\delta} \phi^k_s,\label{h1}\\
\phi^k_0&=&\Phi^k_0,\label{h2}\\
\phi^k_s|_{\partial \Omega^k_s}&=&\Phi^k_s|_{\partial \Omega^k_s}.\label{h3}
\end{eqnarray}

In order to solve (\ref{h1}) we will use $\Phi^k_s$ to push-forward the problem to a domain $B_\sigma$ with smooth boundary, such that $B(0,r_\sigma)^+\subset B_\sigma\subset B(0,r_\infty)^+$. By (\ref{squeeze}), $B_\sigma\subset F_k(B(0,r_\infty)^+)$ (for large $k$), so this is possible. 

Set $h_{k,\sigma}(s)=(\Phi^k_s)_*(g_k(s-r_\sigma^2))$. Note that $\Phi^k_s$ is smooth even when $s=0$ and observe that to solve (\ref{h1})-(\ref{h3}) it suffices to solve the linear boundary value problem
\begin{eqnarray}
\frac{\partial}{\partial s}\chi_k&=&\Delta_{h_{k,\sigma}(s),\delta}\chi_k + D\chi_k(Y_k(s)),\\
\chi_k|_{s=0}&=&\ide_{B_\sigma},\\
\chi_k|_{\partial B(0,r_\sigma)^+}&=&\ide_{\partial B_\sigma},
\end{eqnarray}
where $Y_k(s)=D\Phi^k_s(\frac{\partial}{\partial s}(\Phi^k_s)^{-1})$ and $\chi_k(s)=\phi^k_s\circ(\Phi^k_s)^{-1}|_{B_\sigma}$. 

Now, let $h_s=(\Phi^k_s)_*(g_\infty(s-r_\sigma^2))$. The difference $\chi_k-u$ satisfies the linear problem
\begin{eqnarray}
\frac{\partial}{\partial s}(\chi_k-\ide)-\Delta_{h_{k,\sigma},\delta}(\chi_k-\ide)-D(\chi^k-\ide)(Y_k)&=&(\Delta_{h_{k,\sigma},\delta}-\Delta_{h_s,(F_k)_*\delta})u^i,\\
\chi_k-u|_{s=0}&=&0,\\
\chi_k-u|_{B_\sigma}&=&0.
\end{eqnarray}
This follows from
\begin{eqnarray}
\frac{\partial}{\partial s} \ide_{B_\sigma}&=&\frac{\partial}{\partial s} (\Phi^k_s\circ  (\Phi^k_s)^{-1}),\\
&=&(\frac{\partial}{\partial s} \Phi^k_s ) \circ  (\Phi^k_s)^{-1}+D\Phi^k_s (\frac{\partial}{\partial s}(\Phi^k_s)^{-1} ),\\
&=&\Delta_{h_s,(F_k)_*\delta} \ide_{B_\sigma}+ D\ide_{B_\sigma} (Y_k).
\end{eqnarray}
Since $h_{k,\sigma}\rightarrow h_s$ and $F_k\rightarrow \ide$ smoothly in $B_\sigma$, it follows from parabolic estimates that
\begin{eqnarray}
|\chi_k-u|_{C^{1+\epsilon,\frac{1+\epsilon}{2}}(B_\sigma\times [0,r_\sigma^2])}&\rightarrow& 0, \label{conv2}\\
|\chi_k-u|_{C^{3+\epsilon,\frac{3+\epsilon}{2}}(B_\sigma\times [(\eta r_\sigma)^2,r_\sigma^2])}&\rightarrow& 0, \label{conv3}
\end{eqnarray}
uniformly for any $\eta \in(0,1)$. In particular, $\phi^k_s$ are diffeomorphisms for large $k$.\\

Now, $\phi^k_s$ define parabolic coordinates of radius $r_{\sigma}$. Moreover, writing $\hat g^k(s)=(\phi^k_s)_* g_k(s)$, estimates (\ref{conv1}), (\ref{conv2}) and (\ref{conv3}) imply the bounds
\begin{eqnarray}
Q^{-1}\delta\leq \hat g^k(s)\leq Q\delta &&\qquad \textrm{ for $s\in [0,r_\sigma^2]$,}\nonumber\\
|\hat g^k_{ij}|^*_{\epsilon,r_\sigma}\leq Q,&&\nonumber\\
\sup_{\eta\in (0,1)} \eta^8 |\hat g^k_{ij}|^*_{2+\epsilon,r_\sigma,\eta}\leq Q,&&\nonumber
\end{eqnarray}
for large $k$, since $Q'<Q$. Hence $\liminf r_k\geq r_\sigma=r_{\infty} -\sigma $ for every $\sigma>0$.
\end{proof}

\subsection{Controlling the parabolic radius.}
In the following we show that certain geometric bounds suffice to control the parabolic radius from below. The proof is via a blow up argument, essentially on the same line as in \cite{and90}, \cite{AndTaylor} and \cite{knox_compactness}.

\begin{lemma}\label{prad_control}
Let $(g(t),\gamma(t))$ be a Ricci flow on $M$ with $\Lambda$-controlled boundary, $t\in (a,b]$, and suppose there exist $K,i_0,D>0$ such that
\begin{enumerate}
\item $|\Rm(g(t))|\leq K$ in $M\times (a,b]$,
\item $|\mathcal A(g(t))|\leq K$ on $\partial M\times (a,b]$,
\item $i_{b,loc,g(t)}(x)\geq i_0 D_g(x,t)$  for all $(x,t)\in \partial M\times  (a,b]$,
\end{enumerate}
Then, if $\Lambda>0$ is as in Definition \ref{controlled_data}, there exists $c=c(K,i_0,D,\Lambda,l)>0$ such that 
$$\frac{r^Q_p(x,t)}{D_g(x,t)}\geq c. $$
\end{lemma}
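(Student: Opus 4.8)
The plan is to argue by contradiction via a blow-up (point-picking) argument, following the scheme of Anderson and its adaptations in \cite{AndTaylor,knox_compactness}. Suppose the conclusion fails; then there is a sequence of Ricci flows $(M_k,g_k(t),\gamma_k(t))$ with $\Lambda$-controlled boundary, satisfying (1)--(3) with the same constants $K,i_0,D$, and points $(x_k,t_k)\in\partial M_k\times(a,b]$ with
$$
\frac{r^Q_{p,g_k}(x_k,t_k)}{D_{g_k}(x_k,t_k)}\longrightarrow 0 .
$$
First I would use a standard point-selection lemma (as in \cite{and90}) to replace $(x_k,t_k)$ by a ``worst point'' so that, after rescaling by $\lambda_k:=r^Q_{p,g_k}(x_k,t_k)^{-1}\to\infty$ (setting $\tilde g_k(t)=\lambda_k^2 g_k(t_k+\lambda_k^{-2}t)$, $\tilde\gamma_k$ likewise), the rescaled parabolic radius at $(x_k,t_k)$ becomes $1$, and on a ball of radius $\to\infty$ around $x_k$ (and for a time interval $\to(-\infty,0]$) the parabolic radius stays bounded below, say $\geq \tfrac12$. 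The rescaling sends $D_{g_k}(x_k,t_k)$ to a quantity tending to $\infty$, so after rescaling all the scale-invariant hypotheses improve: $|\Rm(\tilde g_k)|\leq \lambda_k^{-2}K\to 0$, $|\mathcal A(\tilde g_k)|\leq \lambda_k^{-1}K\to 0$, and the injectivity-radius and collar bounds (3), being compared against $D$, become bounds $\inj\geq c$, $i_{b,loc}\geq c$ on larger and larger regions. One must check that $\Lambda$-controlledness is scale-invariant in the appropriate sense — this is where the factor $\rho_\Lambda(\bar x,\bar t)=\Lambda^{-1}D_\gamma(\bar x,\bar t)$ in Definition \ref{controlled_data} is used, so that the harmonic-coordinate bounds on $\gamma$ and $\mathcal H$ persist after blow-up.

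Next I would invoke the compactness theorem, Theorem \ref{comp1}: the rescaled sequence $(M_k,\tilde g_k(t),\tilde\gamma_k(t),x_k)$ satisfies hypotheses (1)--(4) of that theorem (curvature bound, local collar bound, boundary injectivity radius bound, and the parabolic-radius lower bound coming from the point selection, with $\zeta$ built from the ``$\geq\tfrac12$'' bound on the ball of growing radius). Hence a subsequence converges in $C^{m-3}$ Cheeger--Gromov sense to a limit Ricci flow $(M_\infty,g_\infty(t),\gamma_\infty(t),x_\infty)$ with boundary, on a time interval reaching back to $-\infty$ in the limit. Because $|\Rm(\tilde g_k)|\to 0$ and $|\mathcal A(\tilde g_k)|\to 0$, and using the higher-order estimates of Lemma \ref{bounds} (which convert the uniform parabolic-radius lower bound into uniform bounds on $\nabla^j\Rm$ and $\nabla^{j+1}\mathcal A$), the limit has $\Rm(g_\infty)\equiv 0$ and totally geodesic boundary with $\mathcal A(g_\infty)\equiv 0$; moreover the collar and injectivity-radius bounds pass to the limit, so $M_\infty$ contains a half-space-like flat region — indeed $g_\infty$ is (a piece of) flat $\mathbb R^{n+1}_+$ with its standard totally geodesic boundary, and the flow is static. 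This is the ``rigidity'' step: the blow-up limit is the trivial model.

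Finally I would derive the contradiction from the lower semicontinuity of the parabolic radius, Lemma \ref{lsemic}: on flat $\mathbb R^{n+1}_+$ the standard coordinates give parabolic coordinates of every radius satisfying (\ref{prad1})--(\ref{prad3}) with room to spare (all the relevant norms are those of the Euclidean metric, well inside the $Q$-bounds), so $r^{Q'}_{p,g_\infty}(x_\infty,0)=\infty$ for any $Q'>1$; but Lemma \ref{lsemic} then forces $\liminf_k r^Q_{p,\tilde g_k}(x_k,0)\geq r^{Q'}_{p,g_\infty}(x_\infty,0)=\infty$, contradicting the normalization $r^Q_{p,\tilde g_k}(x_k,0)=1$. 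Hence no such sequence exists and the lower bound $r^Q_p(x,t)\geq c\,D_g(x,t)$ holds, with $c$ depending only on $K,i_0,D,\Lambda,l$ (and $n,Q$).

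The main obstacle, and the step I expect to require the most care, is the point-selection and rescaling bookkeeping: one must set up the blow-up so that \emph{simultaneously} the parabolic radius is normalized at the base point, stays bounded below on regions of growing size (to feed Theorem \ref{comp1}'s hypothesis (4) and to get a nontrivial limit), and all of $\Lambda$-controlledness, the collar bound $i_{b,loc}$, and the boundary injectivity radius bound are preserved under the rescaling — the last three being scale-\emph{invariant} only because of the normalization against $D_g(x,t)$ in hypothesis (3), and $\Lambda$-controlledness being preserved only because the harmonic-coordinate requirement in Definition \ref{controlled_data} is itself posed at scale $\rho_\Lambda\sim D_\gamma$. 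A secondary technical point is verifying that the limit is genuinely flat half-space rather than merely flat: one uses that the boundary is totally geodesic in the limit (from $\mathcal A\to 0$) together with the collar structure to rule out quotients, but for the purposes of Lemma \ref{lsemic} it in fact suffices that the limit contains \emph{some} flat half-ball in which Euclidean coordinates give parabolic coordinates of radius exceeding $1$, which is immediate once $\Rm\equiv0$, $\mathcal A\equiv0$, and the injectivity/collar radii are bounded below at $x_\infty$.
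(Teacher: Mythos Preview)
Your proposal is correct and follows essentially the same blow-up scheme as the paper's proof: contradiction, point selection, rescaling so that the parabolic radius is normalized to $1$ while the geometric bounds improve, application of Theorem~\ref{comp1} to extract a limit, identification of the limit as flat $\mathbb R^{n+1}_+$ with totally geodesic boundary, and finally the contradiction via Lemma~\ref{lsemic}. The only notable differences are in bookkeeping: the paper first reduces to precompact subdomains $V\times(a',b]$ (replacing $D_g$ by $D'_{g,V}$) so that the ratio $r^Q_p/D'$ attains its infimum, and then simply takes $(y_k,t_k)$ to be that minimum point rather than invoking an abstract point-picking lemma; it also rescales by $r^{Q'}_p$ for some $Q'<Q$ (rather than $r^Q_p$) to interface cleanly with the $Q'<Q$ in Lemma~\ref{lsemic}.
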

\begin{proof}
It suffices to show that given any precompact domain $V\subset \partial M$ and $[a',b]\subset  (a,b]$ the estimate 
$$\frac{r^Q_p(x,t)}{D'_g(x,t)}\geq  c(K,i_0,D,\Lambda,l)>0$$
holds in $V\times (a',b]$, where $D'_{g,V}(x,t)=\min\{\dist_{g^T(t)}(x, \bar V\setminus V), (t-a')^{1/2},1\}$, assuming that condition (3) holds in $V$ with $D_g$ replaced by $D'_g$.

Suppose there is a sequence of counterexamples, namely manifolds with boundary $M_k$, Ricci flows $(g_k(t),\gamma_k(t))$ with $\Lambda$-controlled boundary satisfying bounds (1)-(3),  $V_k\times [a'_k,b]\subset \partial M_k\times (a,b]$ and spacetime points $(y_k,t_k)\in  V_k \times  (a'_k,b]$ such that
\begin{equation}
 \frac{r^Q_{p,g_k}(y_k,t_k)}{D'_{g_k,V_k}(y_k,t_k)}=\epsilon_k\rightarrow  0.
\end{equation}

Since $r^Q_{p,g_k}>0$ in $\bar V_k\times [a'_k,b]$ for any $k$, by Lemma \ref{lsemic} we may assume that for $1<Q'<Q$
\begin{equation}
 \frac{r^Q_{p,g_k}}{D'_{g_k,V_k}}(x,t)\geq \frac{r^{Q'}_{p,g_k}}{D'_{g_k,V_k}}(y_k,t_k), \textrm{ for all } (x,t)\in V_k\times (a'_k,b].
\end{equation}
 Moreover, $r_k:=r^{Q'}_{p,g_k}(y_k,t_k)\rightarrow 0$. 

Consider the pointed sequence $(M_k,y_k)$  and the rescaled flows $(h_k(t),\bar \gamma_k(t))$,  where $$h_k(t)=r_k^{-2}g_k(t_k+t r_k^2),$$
$$\bar \gamma_k(t)=r_k^{-2}\gamma_k(t_k+t r_k^2).$$
These rescaled flows are defined in $t\in (-\frac{t_k-a'_k}{r_k^2}, \frac{b-t_k}{r_k^2})$ and also have $\Lambda$-controlled boundary. In addition
\begin{itemize}
\item[i.] $|\Rm(h_k(t))|\rightarrow 0$ in $M_k\times(-\frac{t_k-a'_k}{r_k^2}, \frac{b-t_k}{r_k^2})$,
\item[ii.] $|\mathcal A(h_k(t))| \rightarrow 0$ on $\partial M\times (-\frac{t_k-a'_k}{r_k^2}, \frac{b-t_k}{r_k^2})$,
\item[iii.] $r^{Q'}_{p,h_k}(y_k,0)=1$, since the parabolic radius scales like distance,
\item[iv.] $vol_{h_k^T(0)}(B^\partial_{h_k^T}(y_k,r)) \geq v_0 r^n$ for every $0<r< \dist_{g_k^T(t_k)}(y_k,\bar V\setminus V ) r_k^{-1}\rightarrow \infty$,
\item[v.] $i_{b,loc,h_k(0)}(y_k)\geq \frac{i_0}{\epsilon_k}\rightarrow \infty$,
\item[vi.] $\frac{t_k-a'_k}{r_k^2}\rightarrow \infty$. 
\end{itemize}
Moreover, we have $r^Q_{p,h_k}(x,t)\geq \frac{D'_{h_k}(x,t)}{D'_{h_k}(y_k,0)}r^{Q'}_{p,h_k}(y_k,0)$ which gives 
\begin{equation}
 r^Q_{p,h_k}(x,t)\geq c(\dist_{h_k^T(0)}(x,y_k))>0,
\end{equation}
uniformly for large $k$ on $V_k\times [-1,0]$.

By Theorem \ref{comp1} the rescaled flows in $V_k\times (-1,0]$ have a Cheeger-Gromov limit,  a pointed manifold with boundary $(M_\infty, y_\infty)$  with a complete Ricci  flow $h_\infty(t)$. Moreover, $(M_\infty,h_\infty)$ is flat and $\partial M_\infty$ is totally geodesic (and flat, from the Gauss equation). By (iv) we conclude that $\partial M_\infty$ is isometric to $(\mathbb R^n,\delta)$. 

Now, by (i), (ii) and (v) above and comparison geometry we have that $M_\infty$ is isometric to $(\mathbb R^{n+1},\delta)$. Namely, the second fundamental form of the level sets of the distance functions from the boundary will converge to zero, uniformly in fixed distance from the boundary. The claim follows, as these level sets converge smoothly to the corresponding level sets in the limit.

This implies that $r^{Q''}_{p,\delta}(0,0)=\infty$, for any $1<Q''<Q'$, which contradicts Lemma \ref{lsemic} and the fact that $r^{Q'}_{p,h_k}(y_k,0)=1$.
\end{proof}

\subsection{Boundary estimates.}
We finish this section by putting together Lemmata \ref{bounds} and \ref{prad_control}  to obtain local higher order estimates up to the boundary for the Ricci flow.

\begin{theorem}\label{estimates}
Let $(M,g(t),\gamma(t),q)$, $t\in [0,T]$, be a pointed, possibly incomplete, Ricci flow with $\Lambda$-controlled boundary in $(0,T]$. Suppose
\begin{enumerate}
\item $|\Rm(g(t))|_{g(t)}\leq K$ in $M$ and $|\mathcal A(g(t))|_{g^T(t)}\leq K$ on $\partial M$ for all $t\in [0,T]$.
\item $i_{b,loc,g(t)}(q)\geq i_0$, for all $t\in [0,T]$.
\end{enumerate}
For any $j=1,\ldots,m-2$ and $\tau>0$, there exists a constant $C=C(n,\tau,T,\Lambda,l,j,K,i_0)>0$ such that for $t\in [\tau,T]$ 
\begin{eqnarray}
|\nabla^j \Rm(g(t))|_{g(t)}&\leq& C, \quad\textrm{in $C_{g(0)}(q,i_0/2)$.}\label{horderests}\\
|\nabla^{j+1} \mathcal A(g(t))|_{g^T(t)}&\leq& C, \quad\textrm{on $C_{g(0)}(q,i_0/2)\cap \partial M$},\nonumber
\end{eqnarray}
where $C_g(q,r)$ denotes the image under the normal exponential map of $B^\partial_{g^T}(q,r)\times \{s\nu,\;s\in[0,r)\}$.
\end{theorem}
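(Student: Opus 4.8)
The plan is to combine the two main ingredients developed in this section: Lemma \ref{prad_control}, which gives a lower bound on the parabolic radius in terms of the geometric data, and Lemma \ref{bounds}, which converts such a lower bound into the desired higher order estimates. The only subtlety is that Lemma \ref{prad_control} is stated under the hypotheses of $\Lambda$-controlled boundary together with injectivity radius and boundary injectivity radius bounds \emph{at every point}, scaled by $D_g$, whereas here we only assume a boundary injectivity radius bound at the single point $q$. So the first step is a localization: I would restrict attention to the flow on the open set $C_{g(0)}(q,i_0)$, or rather a slightly smaller precompact piece of it, and verify that on such a piece the hypotheses of Lemma \ref{prad_control} hold with the distance-to-boundary-of-the-domain function $D'_g$ in place of $D_g$.

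Concretely, first I would fix a precompact domain $V \subset \partial M$ with $B^\partial_{g^T(0)}(q, 3i_0/4) \subset V \subset B^\partial_{g^T(0)}(q, i_0)$ and work on $V \times (0,T]$. Using assumption (1) (curvature and second fundamental form bounds) together with assumption (2) (the boundary collar bound at $q$), the $C^0$-control argument already used in the proof of Lemma \ref{intinj} and Theorem \ref{comp1} shows that the collar coordinates $h = dr^2 + h_r$ near $\partial M$ are uniformly equivalent to $dr^2 + h_0$ on $C_{g(0)}(q, i_0)$; this keeps $C_{g(0)}(q, i_0/2)$ a uniform distance away from the metric incompleteness of the domain, and propagates the collar bound $i_{b,loc}$ to nearby boundary points and to nearby times (using the curvature bound to compare $g(t)$ with $g(0)$ over the time interval of length $T$, up to a constant depending on $K,T$). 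Then one needs the injectivity radius bound $\inj_{g^T(t)}(x) \geq i_0' D'_g(x,t)$ on $V$: this is not assumed directly, but it follows from the definition of $\Lambda$-controlled boundary — condition (2)(a)-(c) of Definition \ref{controlled_data} forces the harmonic radius of $\gamma(t)$, hence of $g^T(t)$, to be bounded below in terms of $\rho_\Lambda = \Lambda^{-1}D_\gamma$, and $D_\gamma$ is comparable to $D_g$ by condition (1); so the injectivity radius of the boundary is automatically controlled in terms of distance to $\Sigma$, and after passing to the subdomain $V$, in terms of $D'_g$. With all three hypotheses of Lemma \ref{prad_control} verified on $V \times (a',T]$ for suitable $a' \in (0,\tau)$, that lemma yields $r^Q_p(x,t) \geq c\, D'_{g,V}(x,t)$ there, with $c = c(n,\tau,T,\Lambda,l,K,i_0)$.

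Finally, I would apply Lemma \ref{bounds} at each point $(\bar x, t) \in (C_{g(0)}(q,i_0/2)\cap \partial M)\times [\tau,T]$: since $D'_{g,V}(\bar x,t) \geq \min\{c'i_0, \tau^{1/2}, 1\}$ for such points (they are a uniform distance from $\bar V \setminus V$ and $t \geq \tau > a'$), we get $r^Q_p(\bar x, t) \geq c'' = c''(n,\tau,T,\Lambda,l,K,i_0) > 0$, and then Lemma \ref{bounds} (with $\eta$ chosen so that $(\eta r_p)^2 \le$ the time already elapsed, which is fine since $t - a' \ge \tau - a' > 0$ and $r_p$ is bounded above by a constant times $D'_g$ anyway) gives $|\nabla^j\Rm(g(t))| \leq C r_p^{-(j+2)} \leq C'$ and $|\nabla^{j+1}\mathcal A(g(t))| \leq C'$ on a ball $B_{g(t)}(\bar x, \alpha r_p)$; the union of these balls covers $(C_{g(0)}(q,i_0/2)\cap\partial M)$, giving the second estimate. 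For interior points $x \in C_{g(0)}(q,i_0/2)$ at positive distance from $\partial M$, I would instead invoke Shi's estimates (Theorem 1.1, or rather its local version) in a ball staying in the interior, whose size is controlled from below by the distance to $\partial M$ together with the boundary estimates just obtained (which in particular bound $\Rm$ and its derivatives on a collar), and this yields the first estimate at those points as well; patching the collar region and the deep interior gives \eqref{horderests}.

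I expect the main obstacle to be the localization in the first two paragraphs: extracting, from the hypotheses as stated (a pointwise collar bound at $q$ only, plus the $\Lambda$-controlled structure), all three hypotheses of Lemma \ref{prad_control} on a genuinely precompact subdomain uniformly in $t \in (a',T]$. The propagation of the collar bound to nearby points and nearby times, and especially deriving the scale-invariant injectivity-radius lower bound for $g^T(t)$ purely from Definition \ref{controlled_data}, requires care — one must track how the constants depend on $K$, $T$, $\tau$, $i_0$ and $\Lambda$ and make sure no hidden dependence on the (incomplete) global geometry creeps in. By contrast, once Lemma \ref{prad_control} applies on the subdomain, the passage to the stated estimates via Lemma \ref{bounds} and Shi's estimates is routine covering-and-patching.
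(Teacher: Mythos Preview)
Your proposal is correct and follows essentially the same approach as the paper: verify the hypotheses of Lemma~\ref{prad_control} on a precompact subdomain of the boundary near $q$, apply it to bound the parabolic radius below, feed this into Lemma~\ref{bounds} for the near-boundary estimates, and finish with Shi's interior estimates. The only noteworthy variation is in how you obtain the lower bound on $\inj_{g^T(t)}$: you extract it directly from the $\gamma(\bar t)$-harmonic coordinates guaranteed by Definition~\ref{controlled_data}, whereas the paper instead bounds the intrinsic curvature of $(\partial M, g^T(t))$ via the Gauss equation, reads off a volume ratio lower bound from the $\Lambda$-control, and then invokes Cheeger--Gromov--Taylor; both routes are valid and yield the same dependence of constants.
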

\begin{proof}
Let $x\in B^\partial_{g(0)}(q,r_0)$ with $r_0<i_0$. From the definition of $i_{b,loc,g(t)}(q)$ it follows that $i_{b,loc,g(t)}(x)\geq i_0-r_0$, for all $t\in [0,T]$. 
By Lemma \ref{prad_control}, the parabolic radius is bounded below uniformly in intervals $[\tau,T]$ in the ball $B^\partial_{g(0)}(q,i_0/2)$. Therefore, there is a $\delta$-neighbourhood of  $B^\partial_{g(0)}(q,i_0/2)$ in $M$ (with respect to the $g(0)$ metric) where estimates (\ref{horderests}) hold for some $C$, by Lemma \ref{bounds}. 

This proves that the required estimates hold on a neighbourhood of $C(q,i_0/2)\cap\partial M$. Shi's interior estimates then handle the higher derivatives of the curvature in $C(q,i_0/2)$ away from the boundary. 
\end{proof}

Theorem \ref{main_result} is the global version of the result above.
\begin{proof}[Proof of Theorem \ref{main_result}]
The proof is essentially the same as that of Theorem \ref{estimates}. We only need to show that we can estimate on $i_{b,g(t)}$ in terms of $K$ and $i_0$.

Let $\nu(x,t)$ be the inward pointing unit normal to $\partial M$ and $\exp^t$ the exponential map with respect to the metric $g(t)$. We consider the maximal geodesics $\gamma_{x,t}(s)=\exp^t_x(s \nu(x,t))$ and their $g(t)$-length $ L_t(\gamma_{x,t}) $.

Observe that there exists a $c(K)>0$ such that these geodesics don't have focal points for $s< c(K)$. Since $\partial M$ is compact, it follows that whenever $i_{b,g(t)} < c(K)$ 
$$i_{b,g(t)}=\frac{1}{2}\min\left\{L_t(\gamma_{x,t}), \; x\in \partial M\right\}.$$
In particular, there exists an $x_t\in \partial M$ such that $i_{b,g(t)}=\frac{1}{2} L_t(\gamma_{x_t,t})$. By the first variation formula of length $\gamma_{x_t,t}$ is $g(t)$-perpendicular to $\partial M$.

We define $$\frac{d}{dt} i_{b,g(t)}=\liminf_{h\rightarrow 0^+} \frac{ i_{b,g(t)}- i_{b,g(t-h)}}{h}.$$
Since $\gamma_{x_{t},t}$ are geodesics perpendicular to $\partial M$, we obtain that
\begin{eqnarray} 
\left.\frac{d}{dt}\right|_{t=t_0} i_{b,g(t)}&\geq& \left.\frac{1}{2}\frac{d}{dt}\right|_{t=t_0} L_t(\gamma_{x_{t_0},t})=\left.\frac{1}{2}\frac{d}{dt}\right|_{t=t_0} L_t(\gamma_{x_{t_0},t_0})\label{ineq}\\
&\geq& C(n,K)L_{t_0}(\gamma_{x_0,t_0})=C(n,K)i_{b,g(t_0)}, \nonumber
\end{eqnarray}
where the last inequality  follows from the curvature bound and the Ricci flow equation.

Therefore, using (\ref{ineq}) we obtain control of the boundary injectivity radius for $t>0$.
\end{proof}

\section{A compactness theorem.}\label{comp}

In this section we prove a version of Hamilton's compactness theorem for sequences of Ricci flows on manifolds with boundary. Below we define the notion of convergence we use, along the lines of \cite{AndTaylor}.

\begin{definition}[Cheeger-Gromov convergence-strong form]\label{CGstrong}
Let $(M_k,g_k(t),x_k)$, $(M_\infty,g_\infty(t), x_\infty)$ be complete Ricci flows on pointed manifolds with boundary $M_k, M_\infty$, $t\in (a,b]$. Let $\gamma_k(t),\gamma_\infty(t)$ be one parameter families of metrics on $\partial M_k,\partial M_\infty$ such that $[g_k^T(t)]=[\gamma_k(t)]$ and  $[g_\infty^T(t)]=[\gamma_\infty(t)]$.

We will say that $(M_k,g_k(t),\gamma_k(t),x_k)$ converge in the strong pointed $C^{m}$ Cheeger-Gromov sense to  $(M_\infty,g_\infty(t),\gamma_\infty(t), x_\infty)$  if there exists a sequence $R_k\rightarrow +\infty$, an exhaustion $\{K_k\}$ of $M_\infty$ by compact sets such that $B_{g_\infty(0)}(x_\infty,R_k)\subset K_k$ and $C^{m+1}$ diffeomorphisms $F_k: K_k \rightarrow F_k(K_k)\subset M_k$ such that
\begin{enumerate}
\item $F_k(x_\infty)=x_k$.
\item $B_{g_k(0)}(x_k,R_k)\subset F_k(K_k)$.
\item $F_k|_{K_k\cap \partial M_\infty}:K_k\cap \partial M_\infty\rightarrow F_k(K_k\cap \partial M_\infty)\subset \partial M_k$ is a diffeomorphism.
\item $F_k^* g_k(t)\rightarrow g_\infty(t)$ smoothly and locally in $M_\infty\times (a,b]$ in $C^{m}$.
\item $F_k^* \gamma_k(t) \rightarrow \gamma_\infty(t)$ smoothly and locally in $\partial M_\infty\times (a,b]$ in $C^{m}$.
\end{enumerate}
For such convergence we write
$$(M_k,g_k(t),\gamma_k(t), p_k)\rightarrow (M_\infty,g_\infty(t),\gamma_\infty(t),p_\infty).$$
\end{definition}
\begin{remark}
Unlike Definition \ref{CGweak}, Cheeger-Gromov limits in the strong sense defined above are complete and unique.
\end{remark}

\begin{theorem}\label{comp2}
Let $(M_k,p_k)$ be a pointed sequence of manifolds with compact boundary, and $(g_k(t),\gamma_k(t))$ be complete Ricci flows on $M_k$, $t\in (a,b]$ with $\Lambda$-controlled  boundary in $(a,b]$. Assume
\begin{enumerate}
\item $|\Rm(g_k)|_{g_k}\leq K$ in $M_k\times (a,b]$.
\item $|\mathcal A(g_k)|_{g_k^T}\leq K$ in $\partial M_k\times (a,b]$.
\item $i_{b,g_k(0)}\geq i_0$.
\end{enumerate}
for all $k$. Then there is a pointed manifold with boundary $(M_\infty, p_\infty)$,  a Ricci flow $g_\infty(t)$ on $M_k$ and a family of metrics $\gamma_\infty(t)$ on $\partial M_\infty$ such that, up to subsequence, 
$$(M_k,g_k(t),\gamma_k(t), p_k)\rightarrow (M_\infty,g_\infty(t),\gamma_\infty(t),p_\infty),$$
in the $C^{m-3}$ topology.
\end{theorem}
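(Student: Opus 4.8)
The plan is to deduce Theorem \ref{comp2} from the weak compactness Theorem \ref{comp1}: I would verify the hypotheses of the latter using the boundary injectivity radius evolution from the proof of Theorem \ref{main_result} together with the parabolic radius lower bound of Lemma \ref{prad_control}, and then upgrade the resulting weak Cheeger--Gromov limit to a strong one (Definition \ref{CGstrong}) using the completeness of the $M_k$ and the compactness of the $\partial M_k$.

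First I would control the boundary injectivity radius on each compact subinterval $[a',b]\subset(a,b]$. Since $\partial M_k$ is compact, $i_{b,g_k(0)}\geq i_0$, and $|\Rm(g_k)|,|\mathcal A(g_k)|\leq K$, the argument in the proof of Theorem \ref{main_result} — normal geodesics to $\partial M_k$ have no focal point before a distance $c(K)>0$, and the Dini estimate $\frac{d}{dt}i_{b,g_k(t)}\geq -C(n,K)\,i_{b,g_k(t)}$ holds while $i_{b,g_k(t)}$ is small — combined with Shi's estimates for times bounded away from $a$ (to deal with $t<0$) gives a uniform lower bound $i_{b,g_k(t)}\geq i_1(n,K,i_0,a',b)>0$; by compactness of $\partial M_k$ this also gives $i_{b,loc,g_k(t)}(x)\geq i_1$ for every $x\in\partial M_k$.

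Next I would verify the hypotheses of Lemma \ref{prad_control}. Conditions (1), (2) there are given. For (3), compactness of $\partial M_k$ makes it metrically complete, so $\Sigma_k=\emptyset$ and $D_{g_k}\equiv1$; it then suffices to bound $\inj_{g_k^T(t)}(x)$ and $i_{b,loc,g_k(t)}(x)$ uniformly from below, the second being the previous step. For the first, the Gauss equation bounds the sectional curvature of $(\partial M_k,g_k^T(t))$ by $K+K^2$, while the $\Lambda$-controlled boundary hypothesis provides $\gamma_k(t)$-harmonic charts of definite radius $\rho_\Lambda=\Lambda^{-1}$ with $C^l$-bounded $\gamma_k$, hence (via $\Lambda^{-2}\gamma_k\leq g_k^T\leq\Lambda^2\gamma_k$) a uniform lower bound on the volume of small $g_k^T(t)$-balls in $\partial M_k$; \cite{CGT} then yields the injectivity radius bound, as in step (b) of the proof of Theorem \ref{estimates}. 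Lemma \ref{prad_control} now gives $r^Q_{p,g_k}(x,t)\geq c>0$ uniformly on $\partial M_k\times(a,b]$. Applying Theorem \ref{comp1} with basepoints in $\partial M_k$ (if $p_k$ is interior but at bounded distance from $\partial M_k$, replace it by a nearest boundary point and transport $p_k$ back at the end) — its hypotheses hold by the above, after rescaling $i_0$, with hypothesis (4) satisfied because the constant bound $r^Q_{p,g_k}\geq c$ dominates $\zeta\bigl(D_{g_k}/\dist_{g_k^T}(\cdot,x_k)\bigr)$ for, say, $\zeta(s)=c\min(s,1)$ — produces, up to a subsequence, a weak $C^{m-3}$ Cheeger--Gromov limit $(M_\infty,g_\infty(t),\gamma_\infty(t),x_\infty)$ with $[\gamma_\infty]=[g_\infty^T]$.

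It remains to upgrade this to strong convergence. Since $\partial M_k$ is compact of uniformly bounded geometry and $M_k$ is complete, Lemma \ref{intinj} propagates a lower bound on the interior injectivity radius inward from $\partial M_k$ along collars; combined with Shi's estimates, \cite{and90}, the boundary estimates of Lemma \ref{bounds}, and a diagonal argument over collars of increasing width, this lets the exhausting compact sets $K_k$ of Theorem \ref{comp1} be chosen with $B_{g_\infty(0)}(x_\infty,R_k)\subset K_k$ and $B_{g_k(0)}(x_k,R_k)\subset F_k(K_k)$ for some $R_k\to\infty$, and forces $M_\infty$ to be complete, which is exactly Definition \ref{CGstrong}. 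I expect the main obstacle to be this last point — ensuring the weak limit is complete, i.e.\ that no geometry degenerates at finite distance from $x_\infty$: here one must genuinely exploit completeness of the $M_k$ and compactness of $\partial M_k$, since Lemma \ref{intinj} controls the interior injectivity radius only in terms of distance to the boundary, with constants that a priori worsen with that distance, so the exhaustion must be arranged so that the region adjoined at stage $k$ stays within a controlled collar and completeness is recovered only in the diagonal limit.
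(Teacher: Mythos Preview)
Your proposal and the paper's proof use the same ingredients (the $i_b$ evolution inequality, Lemma~\ref{prad_control}, Lemma~\ref{bounds}, Shi's estimates, Lemma~\ref{intinj}, and the Anderson--Taylor boundary theory) but are organised differently. The paper does \emph{not} deduce Theorem~\ref{comp2} from Theorem~\ref{comp1}. Instead it first extracts the derivative bounds $|\nabla^j\Rm(g_k(0))|\leq C$ and $|\nabla^{j+1}\mathcal A(g_k(0))|\leq C$ from Lemmata~\ref{prad_control} and~\ref{bounds} and Shi, then applies the compactness result of \cite{AndTaylor} (with elliptic regularity, \cite{CGT}, and Lemma~\ref{intinj} for interior injectivity control) directly to the \emph{complete} manifolds $(M_k,g_k(0))$ to obtain a strong Cheeger--Gromov limit at time $0$; the flow convergence then follows by Arzel\`a--Ascoli as in \cite{hamcomp}. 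This avoids your upgrade step entirely: the strong convergence comes for free from \cite{AndTaylor} once the derivative estimates are in hand.

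Your route is not wrong in spirit, but the upgrade step has two genuine gaps. First, Theorem~\ref{comp1} requires basepoints on $\partial M_k$, and you only treat the case $\dist_{g_k(0)}(p_k,\partial M_k)$ bounded; if this distance tends to infinity the boundary escapes and one must fall back on Hamilton's original compactness for complete manifolds without boundary, a dichotomy you should state. Second, the phrase ``diagonal argument over collars of increasing width'' cannot be right as written: the collar width is capped by $i_{b,g_k(0)}$, which you have only bounded \emph{below}, not shown to grow. What is actually needed is to exhaust $B_{g_k(0)}(p_k,R)$ for $R\to\infty$ by combining a fixed-width boundary collar (controlled via Theorem~\ref{comp1} or Lemma~\ref{bounds}) with interior balls at arbitrary depth, where injectivity control comes from propagating the volume lower bound of Lemma~\ref{intinj} inward via Bishop--Gromov and then invoking \cite{CGT}. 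Once you spell this out you will have essentially reproduced the \cite{AndTaylor} argument, which is why the paper simply cites it.
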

\begin{proof}
We sketch the proof as it is essentially similar to the proof of Theorem \ref{comp1} (following \cite{hamcomp}). First, as in the proofs of Theorems \ref{main_result} and \ref{estimates}, the assumptions of the theorem provide a uniform lower bound on $i_{b,g(t)}$ and the injectivity radius of the boundary. 

Then,  Theorems \ref{bounds}, \ref{prad_control} and Shi's local derivative of curvature estimates give
\begin{enumerate}
\item $|\nabla^j \Rm(g_k(0))|_{g_k(0)}\leq C,$
\item $|\nabla^{j+1}\mathcal A(g_k(0))|_{g^T_{k}(0)}\leq C,$
\end{enumerate}
for some $C$ independent of $k$ (and appropriate order depending on $m$).

Using the compactness result in \cite{AndTaylor} and elliptic regularity one obtains a pointed smooth Cheeger-Gromov limit $(M_\infty,g_\infty(0))$. Here, the interior injectivity radius control follows from \cite{CGT},  the curvature bound and the volume bound of Lemma \ref{intinj} .

Now, as in \cite{hamcomp}, the curvature bound and Arzel\`{a}-Ascoli show that a subsequence converges to a limit  flow $g_\infty(t)$ in the sense of Definition \ref{CGstrong}. The convergence of $\gamma_k(t)$ follows like in Theorem \ref{comp1}.
\end{proof}

\section{Estimates on 3-manifolds.}\label{threemnflds}
Now we assume that $M$ is a three dimensional manifold with compact boundary. The following theorem shows that along a three dimensional Ricci flow the second fundamental form of the boundary is essentially controlled by the ambient curvature. In the proof we use Liouville's Theorem for bounded subharmonic functions in $\mathbb R^2$. It is unknown yet whether Theorem \ref{sffb} holds in higher dimensions.

\begin{theorem}\label{sffb}
Let $(g(t),\gamma(t))$, $t\in [0,T]$, be a three dimensional complete Ricci flow  with compact $\Lambda$-controlled boundary in $[0,T]$. Assume
\begin{eqnarray}
|\Rm(g)|_{g}&\leq& K \quad \textrm{in }  M\times [0,T],\nonumber\\
i_{b,g(0)}&\geq& \frac{i_0}{\|\mathcal A(g(0))\|_\infty}.\nonumber
\end{eqnarray}
Then, for $0\leq j\leq m-4$, there exist $C_j=C(K,T,\Lambda,l,i_0,j)>0$ such that the second fundamental form $\mathcal A$ of $\partial M$ and the boundary injectivity radius satisfy
\begin{eqnarray}
 i_{b,g(t)}&\geq& C_0^{-1}  \quad\textrm{for all }  t\in[0,T], \label{ibestimate}\\
| \mathcal A(g)|_{g^T}&\leq& C_0 \quad\textrm{in }  \partial M\times [0,T],\label{sffestimate1}\\
| \nabla^j \mathcal A(g(t))|_{g^{T}(t)}&\leq& \frac{C_j}{t^{\frac{j+1}{2}}}  \quad\textrm{in }  \partial M\times (0,T], \textrm{ for }\; j\geq 1,\label{sffestimate2}\\
| \nabla^{j-1} \Rm(g(t))|_{g(t)}&\leq& \frac{C_j}{t^{\frac{j+1}{2}}}  \quad\textrm{in }   M\times (0,T], \textrm{ for }\; j\geq 2. \label{dcurvestimate}
\end{eqnarray}
\end{theorem}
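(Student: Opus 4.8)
\textbf{Proof proposal for Theorem \ref{sffb}.}

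The plan is to run a blow-up argument combined with the dimension-three structure of flat manifolds with boundary, mirroring the strategy of Lemma \ref{prad_control} but now producing an a priori bound on $\mathcal A$ itself rather than on the parabolic radius. First I would set up a contradiction quantity. Suppose \eqref{sffestimate1} fails along a sequence, so there are flows $(g_k(t),\gamma_k(t))$ satisfying all the hypotheses with $\|\mathcal A(g_k)\|_\infty \to \infty$, and pick spacetime points $(x_k,t_k)$ achieving (up to a factor) the supremum of $|\mathcal A|$ together with a suitable weighting by distance to the ``edge'' $\Sigma$ and by $t^{1/2}$, so that the blow-up is centered where the second fundamental form is largest in a scale-invariant sense. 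Rescale by $\lambda_k = |\mathcal A(g_k)(x_k,t_k)|$, setting $h_k(t) = \lambda_k^2 g_k(t_k + \lambda_k^{-2} t)$; then $|\Rm(h_k)| \le K\lambda_k^{-2} \to 0$, $|\mathcal A(h_k)|$ is bounded above (by the point-selection) with $|\mathcal A(h_k)(x_k,0)| = 1$, and the rescaled boundary injectivity radius tends to infinity because of the hypothesis $i_{b,g(0)} \ge i_0/\|\mathcal A(g(0))\|_\infty$ (this is exactly where that scaling-adapted assumption is needed). The point-selection should also give that the time interval of definition, after rescaling, exhausts $(-\infty,0]$, and that the parabolic radius of $h_k$ is bounded below on compact sets via Lemma \ref{prad_control} applied to the unrescaled flows — one has to check the $\Lambda$-controlled boundary hypothesis survives rescaling, which it does since it is scale-invariant.

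Next I would invoke the compactness theorem, Theorem \ref{comp1}, to extract a limit $(M_\infty, h_\infty(t), x_\infty)$, a complete eternal Ricci flow with boundary that is flat in the interior (from $|\Rm(h_k)|\to 0$) and whose boundary is totally geodesic and flat by the Gauss equation in the limit — wait, not totally geodesic: the whole point is that $|\mathcal A(h_\infty)(x_\infty,0)| = 1 \neq 0$, so the limit boundary has nontrivial second fundamental form but the ambient space is flat. Since $i_{b,loc}(x_k)\to\infty$ and $\inj_{\partial}(x_k)\to\infty$, the limit boundary $\partial M_\infty$ is complete and, being flat and $2$-dimensional, is isometric to $(\mathbb{R}^2,\delta)$; moreover $M_\infty$ is a flat $3$-manifold with a complete flat totally-geodesic-at-infinity... no — $M_\infty$ is flat with complete boundary $\mathbb{R}^2$, so by comparison geometry and the collar structure $M_\infty$ is isometric to the half-space $\mathbb{R}^3_+$ with its flat metric, and the boundary $\mathbb{R}^2 \times \{0\}$ sitting inside flat $\mathbb{R}^3_+$ must then be totally geodesic — contradicting $|\mathcal A| = 1$ at $x_\infty$. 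The subtlety is that $\mathcal A$ of the limit need not vanish instantaneously even if the ambient limit is flat; to rule this out I would use the evolution equation for $\mathcal A$ along the flow (which in the limit becomes a heat-type equation with no curvature forcing terms since $\Rm \to 0$ and $\mathcal H$ is controlled by the $\Lambda$-controlled boundary condition), together with the fact that the limit is eternal with bounded $\mathcal A$, and here is where Liouville's theorem for bounded subharmonic functions on $\mathbb{R}^2$ enters: one derives that $|\mathcal A|^2$ (or a suitable scalar built from it) is bounded and subharmonic (or a bounded ancient solution of a heat inequality) on the flat boundary $\mathbb{R}^2$, hence constant, and then a separate argument forces that constant to be zero, contradicting $|\mathcal A(h_\infty)(x_\infty,0)|=1$.

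Having established \eqref{sffestimate1}, the bound \eqref{ibestimate} on $i_{b,g(t)}$ follows from the argument already used in the proof of Theorem \ref{main_result}: geodesics normal to $\partial M$ have no focal points before a time depending only on $K$ and on the now-uniform bound on $\mathcal A$, and the first-variation computation \eqref{ineq} controls the decay of the boundary injectivity radius using only the curvature bound and the Ricci flow equation. Finally, \eqref{sffestimate2} and \eqref{dcurvestimate} are the higher-order estimates: with \eqref{sffestimate1} and \eqref{ibestimate} in hand, the flow now satisfies all the hypotheses of Theorem \ref{estimates} (equivalently Theorem \ref{main_result}) with $K$ replaced by $\max\{K, C_0\}$, which yields bounds on $|\nabla^j \Rm|$ and $|\nabla^{j+1}\mathcal A|$ on $[\tau, T]$; to get the sharp $t^{-(j+1)/2}$ blow-up rate as $t \to 0$ one reruns Lemma \ref{bounds}/Theorem \ref{estimates} with a parabolic-rescaling normalization at scale $t^{1/2}$, exactly as in the proof of Shi's global estimate (Theorem 1.1) stated in the introduction, transferring the scale-invariant parabolic estimates into the claimed powers of $t$. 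The main obstacle is the second paragraph: producing the limit flat half-space and, crucially, the Liouville-type argument that forces $\mathcal A \equiv 0$ in the limit — this is the only genuinely three-dimensional input and the place where the hypotheses must be used most carefully.
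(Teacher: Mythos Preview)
Your overall architecture---blow up at a point of maximal $|\mathcal A|$, pass to a flat limit, invoke a Liouville theorem on the $2$-dimensional boundary, then bootstrap to the higher-order estimates---matches the paper's, but the central step is misidentified and as written does not close.

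The main gap is the Liouville argument. You propose that $|\mathcal A|^2$ (``or a suitable scalar built from it'') is subharmonic on the limit boundary, but for a minimal surface in flat $3$-space Simons' identity gives $\Delta|\mathcal A|^2 = 2|\nabla\mathcal A|^2 - 2|\mathcal A|^4$, which has no sign. The quantity the paper actually uses is the \emph{conformal factor} $U$ in $g^T = e^{2U}\gamma$, and this is precisely where the $\Lambda$-controlled boundary hypothesis is spent: it supplies the background metrics $\gamma_k$ with $|U_k|\le\ln\Lambda$ and $R_{\bar\gamma_k}$ controlled, so that after rescaling $\bar\gamma_k\to\delta$ on $\mathbb R^2$. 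The conformal-change equation $-2\Delta_{h_k^T}U_k = R_{h_k^T} - R_{\bar\gamma_k}e^{-2U_k}$, combined with the Gauss equation and $\Rm(h_k)\to 0$, $\mathcal H(h_k)\to 0$, gives $R_{h_k^T}\le\varepsilon_k\to 0$, so $U_\infty$ is weakly subharmonic and bounded on $\mathbb R^2$, hence constant by Liouville. Feeding $\nabla U_\infty=0$ back into the Gauss identity integrated against test functions then forces $\int|\mathcal A(h_k)|^2\psi\to 0$, contradicting $|\mathcal A(h_\infty)(p_\infty)|=1$. Two structural points also deserve correction. First, the paper rescales \emph{statically}, $h_k=A_k^2 g_k(t_k)$, and uses only the Anderson--Taylor $C^{1,\epsilon}$ Riemannian compactness, working throughout with weak (integral) identities; your flow rescaling and appeal to Theorem~\ref{comp1} is heavier and, while not circular (point selection does give $|\mathcal A(h_k)|\le 1$, so Lemma~\ref{prad_control} can be applied to the rescaled flows), it buys nothing, since flatness of the limit alone does not force $\mathcal A_\infty\equiv 0$. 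Second, your claim that the rescaled boundary injectivity radius tends to infinity is incorrect: the preliminary estimate the paper proves gives only $i_{b,h_k}\ge\delta_0$ for a uniform $\delta_0>0$, so the focal-point route (``$M_\infty\cong\mathbb R^3_+$, hence $\mathcal A_\infty=0$'') does not close either. Your treatment of \eqref{ibestimate}, \eqref{sffestimate2} and \eqref{dcurvestimate} once \eqref{sffestimate1} is established is essentially correct and in line with the paper.
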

\begin{proof}
We begin with some necessary non-collapsing estimates. First, we obtain an estimate on $i_{b,g(t)}$ for $t>0$. From comparison geometry, if $\lambda_{max} (t)$ is the largest eigenvalue of $\mathcal A(g(t))$ on $\partial M$ and
 $\mathbf K(t)=\max\{\sqrt K,\lambda_{max}(t)\} $ then 
$$i_{b,g(t)}\geq \min\left\{ \frac{\pi}{2\mathbf K(t)}, \frac{1}{2}\min\left\{L_t(\gamma_{x,t}), \; x\in \partial M\right\}\right\},$$
where $L_t(\gamma_{x,t})$ is as in the proof of Theorem \ref{main_result}. Moreover,  when $i_{b,g(t)}< \frac{\pi}{2\mathbf K(t)}$ we have
\begin{equation}\frac{d}{dt} i_{b,g(t)}\geq -C i_{b,g(t)},\label{ibeqn}\end{equation}
for some positive constant $C$ depending only on $K$ and $n$, by (\ref{ineq}).

Let $\mathbf K_{max}=\max_{t\in [0,T]} \mathbf K(t)$. It follows from (\ref{ibeqn}) that there exists an $0<\alpha=\alpha(K,i_0,T)<1$ with the property that 
\begin{equation}
i_{b,g(t)}\geq \alpha  \frac{\pi}{2\mathbf K_{max}},\label{ibbound}
\end{equation}
for all $t\in [0,T]$.\\

Second, note that the $\Lambda$-control of the boundary implies a volume ratio bound for the induced metrics $g^T$ on $\partial M$. Namely, there exist $r_0,v>0$ such that for every $p\in \partial M$
\begin{equation}
 \frac{vol(B_{g^T(t)}(p,r))}{r^n}\geq v, \quad\textrm{for every } r\leq r_0. \label{vol_bound}
\end{equation}

Now, consider a sequence of counterexamples, namely $\Lambda$-controlled Ricci flows $(M_k,g_k(t),\gamma_k(t))$ and $(p_k,t_k)\in \partial M_k\times [0,T]$ such that
$$|\mathcal A(g_k(t_k))(p_k)|=\max_{\partial M_k\times [0,T]}|\mathcal A(g_k(t))(p)|\rightarrow \infty.$$
Set $ A_k=|\mathcal A(g(t_k))(p_k)|$, and consider the pointed manifolds with boundary $(M_k,p_k)$ with the rescaled metrics 
\begin{eqnarray}
h_k&=& A_k^2 g_k(t_k),\nonumber\\
\bar \gamma_k&=& A_k^2 \gamma_k(t_k).\nonumber
\end{eqnarray}

Along this sequence, 
\begin{eqnarray}
 \|\Rm(h_k)\|_\infty&\rightarrow& 0, \label{curv_to_zero}\\
\|\mathcal H(h_k)\|_\infty&\rightarrow& 0, \label{mcurv_to_zero}\\
\|\mathcal A(h_k)\|_\infty&\leq& 1, \\
|\mathcal A(h_k)(p_k)|&=&1,\\
\inj_{h^T_k}&\geq& \delta_0,\;\textrm{by \cite{CGT} and (\ref{vol_bound})},\\
i_{b,h_k}&\geq& \delta_0, \;\textrm{by (\ref{ibbound})}.
\end{eqnarray} 
In addition, the interior injectivity radius is controlled from Lemma \ref{intinj} and, since the boundary is $\Lambda$-controlled, $\mathcal H(h_k)$ is controlled in the Lipschitz sense. It follows by the compactness result in \cite{AndTaylor} that there is a subsequence converging in the $C^{1,\epsilon}$ topology to a $C^{1,\epsilon}$ Cheeger-Gromov limit $(M_\infty, h_\infty,p_\infty)$ satisfying $|\mathcal A_\infty(p_\infty)|=1$.\\

Moreover, $h_k^T=e^{2U_k}\bar \gamma_k(t_k)$ for appropriate functions $U_k$ on $\partial M_k$ which satisfy the elliptic equation
\begin{equation}
-2\Delta_{h^T_k} U_k= R_{h_k^T}-R_{\bar \gamma_k}e^{-2U_k}.\label{conformal_eqn}
\end{equation}
By assumption $|U_k|\leq\ln\Lambda$, and  $h^T_k$ is controlled in $C^{1,\epsilon}$ in $h^T_k$-harmonic coordinates. Elliptic regularity shows that $U_k$ is also controlled in $C^{1,\epsilon}$ (in a slightly smaller domain).

Therefore, after possibly passing to a subsequence, we may assume that $F_k^*\bar \gamma_k\rightarrow \bar\gamma_\infty$ in $C^{1,\epsilon}$, $F_k$ being the diffeomorphisms associated to the Cheeger-Gromov convergence of $h_k$ to $h_\infty$ (see Definition \ref{CGstrong}). It is clear that $\bar \gamma_\infty$ is just the Euclidean metric $\delta$ in $\mathbb R^2$, and that there exists a function $U_\infty$ so that $h^T_\infty=e^{2U_\infty}\delta$. Moreover, $F_k^*U_k\rightarrow U_\infty$ uniformly locally  in $C^{1,\epsilon}$.\\

From (\ref{conformal_eqn}), it follows that for $\psi \in C^{\infty}_c(\mathbb R^2)$, 
\begin{equation}
 \int_{\mathbb R^2}\frac{1}{2}\left( e^{2U_k}R_{h_k^T} - R_{\bar\gamma_k}\right)\psi dvol_{\bar \gamma_k}=\int_{\mathbb R^2}\langle \nabla U_k,\nabla\psi\rangle dvol_{\bar \gamma_k}\rightarrow \int_{\mathbb R^2}\langle \nabla U_\infty,\nabla \psi\rangle dvol_\delta.
\end{equation}
Now, the Gauss equation together with (\ref{curv_to_zero}) and (\ref{mcurv_to_zero}) gives $R_{h_k^T}\leq \varepsilon_k$, for some positive $\varepsilon_k \rightarrow 0$. Therefore, for $\psi\geq 0$ 
\begin{equation}
 \int_{\mathbb R^2}\frac{1}{2}\left( e^{2U_k}R_{h_k^T} - R_{\bar\gamma_k}\right)\psi dvol_{\bar \gamma_k}\leq \varepsilon'_k\rightarrow 0,
\end{equation}
hence $U_\infty$ is subharmonic in the weak sense. Morever, since $|U_\infty|\leq \ln \Lambda$ it follows from Liouville's theorem that it is constant.\\

Now, multiplying the Gauss equation for $h_k$ by $\psi \in C^{\infty}_c(\mathbb R^2)$, using (\ref{conformal_eqn}) and integrating we obtain
\begin{eqnarray}
 \int_{\mathbb R^2}|\mathcal A(h_k)|^2\psi dvol_{h_k^T}&=&-2\int_{\mathbb R^2}\langle \nabla U_k,\nabla \psi\rangle dvol_{h_k^T}\label{Azero}\\
&&-\int_{\mathbb R^2} (R_{\bar\gamma_k}e^{-2U_k}-\mathcal H(h_k)^2 +R_{h_k}-2Ric_{h_k}(N_k,N_k))\psi dvol_{h_k^T},\nonumber
\end{eqnarray}
where $N_k$ denotes the outward pointing unit normal with respect to $h_k$. Here, to simplify notation we write $h_k$ instead of $F_k^* h_k$.

It is clear that the right-hand side of (\ref{Azero}) converges to zero, which implies that $\mathcal A(h_\infty)\equiv 0$.  On the other hand, $|\mathcal A_\infty(p_\infty)|=1$ by the $C^{1,\epsilon}$ convergence, which is a contradiction.

This shows that $|\mathcal A(g(t))|$ is bounded above in terms of $K$, $i_0$ and $n$, proving (\ref{sffestimate1}). Then, (\ref{ibestimate}) follows from (\ref{ibbound}).\\

We describe the proof of estimate (\ref{sffestimate2}) briefly. Consider a sequence of counterexamples $(M_k,g_k(t),\gamma_k(t))$ such that there exist $(p_k,t_k)\in \partial M_k\times (0,T]$
so that

$$t_k^{\frac{j+1}{2}}|\nabla^ j \mathcal A(g_k(t_k))(p_k)|=\max_{\partial M_k\times [0,T]} t^{\frac{j+1}{2}}|\nabla^j \mathcal A(g_k(t))(p)|\rightarrow \infty.$$

Under the assumptions of the theorem and estimate (\ref{sffestimate1}), setting $Q_k= | \nabla^ j \mathcal A(g_k(t_k))(p_k)|^{\frac{2}{j+1}}$ and rescaling
\begin{eqnarray}
h_k(t)&=& Q_k g_k(t_k+tQ_k^{-1}),\nonumber\\
\bar \gamma_k(t)&=&Q_k \gamma_k(t_k+t Q_k^{-1}),\nonumber
\end{eqnarray}
we may apply Theorem \ref{comp2} to obtain 
$$(M_k,h_k(0),\bar \gamma_k(0),p_k)\rightarrow (M_\infty,h_\infty,\bar \gamma_\infty,p_\infty).$$
The limit will have a totally geodesic boundary, hence $|\nabla^j \mathcal A_\infty|=0$. This contradicts that $|\nabla^j \mathcal A_\infty(p_\infty)|=1$, which holds if $m$ is large enough.\\

Estimate (\ref{dcurvestimate}) is shown similarly.
\end{proof}

\begin{remark}
Clearly estimate (\ref{sffestimate1}) holds for any Riemannian 3-manifold with compact boundary satisfying the assumptions of Theorem \ref{sffb}, a fact which may be of independent interest. It suffices that the boundary is $\Lambda$-controlled up to order $1+\epsilon$. 
\end{remark}

Theorem \ref{sffb} allows us to improve the continuation principle in \cite{gianniotis} and Theorem \ref{comp2} in dimension three.
\begin{corollary}\label{cp}
Let $g(t)$, $t\in [0,T)$, $T<\infty$,  be a maximal Ricci flow on a compact 3-manifold with boundary $M$. Suppose that there exist smooth data $\gamma(t)$ and $\eta(t)$ defined for $t\in[0,T')$, $T'>T$, such that
\begin{eqnarray}
[g^T(t)]&=&[\gamma(t)],\nonumber\\
\mathcal H(g(t))&=&\eta.\nonumber
\end{eqnarray}
for all $t\in [0,T)$. Then, 
$$\sup_{M\times [0,T)}|\Rm(g(t)|_{g(t)}=\infty.$$
\end{corollary}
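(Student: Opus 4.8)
The plan is to argue by contradiction. Suppose instead that $\sup_{M\times[0,T)}|\Rm(g(t))|_{g(t)}=K<\infty$. The goal is then to show that the flow can be continued past $T$, contradicting maximality. The overall strategy is to verify that the hypotheses of Theorem \ref{sffb} are met on subintervals $[0,t]\subset[0,T)$ with constants uniform in $t$, which yields a uniform bound on the second fundamental form $\mathcal A(g(t))$ and its derivatives, together with a uniform lower bound on $i_{b,g(t)}$; once all geometric quantities (curvature, second fundamental form, and their derivatives, plus non-collapsing) are controlled uniformly up to time $T$, one extracts a limiting metric $g(T)$ and appeals to the short-time existence result of \cite{gianniotis} starting from $g(T)$ with the given data $\gamma(t),\eta(t)$ (which are defined past $T$), contradicting that $[0,T)$ is maximal.

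Here are the steps in order. First, I would check that the flow $(g(t),\gamma(t))$, for a choice of representatives $\gamma(t)$ making the boundary $\Lambda$-controlled, indeed satisfies the $\Lambda$-controlled boundary condition on $[0,T)$: this should follow from the construction in \cite{gianniotis}, since the data $\gamma(t),\eta(t)$ are smooth and fixed, with $\Lambda$ depending only on these data on the compact time interval and the parabolic regularity they enjoy. Second, I would verify the initial boundary injectivity radius hypothesis $i_{b,g(0)}\geq i_0/\|\mathcal A(g(0))\|_\infty$ — at $t=0$ this is a fixed geometric quantity of $(M,g_0)$ (or of the $C^{1,\alpha}$-limit metric), so it holds for some $i_0$. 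Third, apply Theorem \ref{sffb} on each $[0,t]$ with $t<T$: since $|\Rm(g)|_g\leq K$ uniformly and the other hypotheses hold with $t$-independent constants, estimates (\ref{ibestimate})--(\ref{dcurvestimate}) give $i_{b,g(s)}\geq C_0^{-1}$, $|\mathcal A(g(s))|_{g^T}\leq C_0$ on all of $[0,T)$, and $|\nabla^j\mathcal A|, |\nabla^{j-1}\Rm|$ bounded on $[\tau,T)$ for every $\tau>0$, with $C_j$ independent of the endpoint. Fourth, combine these with Shi's interior estimates (Theorem 1.1) and the higher-order boundary estimates of Theorem \ref{main_result}/\ref{estimates} to get full uniform $C^\infty$ control of $g(t)$ on $[\tau,T)$. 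Fifth, since $\partial_t g=-2\Ric(g)$ is then uniformly bounded in $C^\infty$, the metrics $g(t)$ converge in $C^\infty$ as $t\to T^-$ to a smooth metric $g(T)$ on $M$, which is again $\Lambda$-controlled with $\mathcal H(g(T))=\eta(T)$ and $[g^T(T)]=[\gamma(T)]$. Finally, apply the local existence theorem of \cite{gianniotis} with initial metric $g(T)$ and data $\gamma(t+T),\eta(t+T)$ (defined for $t$ in a neighbourhood of $0$ since $T'>T$), and glue to get a Ricci flow on $[0,T+\delta)$ realizing the same data — contradicting maximality of $T$.

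The main obstacle I expect is the application of Theorem \ref{sffb} with constants genuinely independent of the endpoint $t<T$: one must be careful that the constant $C_j$ there depends only on $K$, $T$, $\Lambda$, $l$, $i_0$, $j$ and \emph{not} on how close the subinterval's right endpoint is to $T$, and likewise that the $\Lambda$-controlled-boundary constant and the initial $i_0$ can be chosen once and for all. This is where the fact that the boundary data $\gamma(t),\eta(t)$ are smooth and \emph{fixed} on a time interval strictly containing $[0,T]$ is essential — it gives uniform parabolic control of the Ricci--DeTurck boundary value problem and hence a uniform $\Lambda$. A secondary technical point is ensuring the convergence $g(t)\to g(T)$ is strong enough (smooth up to and including the boundary, respecting the collar structure) to serve as a valid initial metric for the restart; this follows from the uniform higher-order estimates but should be stated carefully using the collar coordinates of Definition \ref{inj2}.
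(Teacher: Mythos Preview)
Your proposal is correct, and the core idea---using Theorem \ref{sffb} to bound $|\mathcal A|$ once $|\Rm|$ is assumed bounded---is exactly what the paper does. The difference is one of packaging: the paper's proof is two lines because it invokes the continuation principle already established in \cite{gianniotis}, which says that at the maximal time one has
\[
\sup_{M\times [0,T)}|\Rm(g(t))|_{g(t)}+\sup_{\partial M\times [0,T)}|\mathcal A(g(t))|_{g^T(t)}=\infty,
\]
and then observes that Theorem \ref{sffb} forces the second summand to stay bounded, so the first must blow up. You instead re-derive that continuation principle from scratch (uniform higher-order estimates, extraction of a smooth limit $g(T)$, restart via short-time existence), which is a valid but longer route. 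Your careful attention to the uniformity of $\Lambda$ and of the constants in Theorem \ref{sffb} as the endpoint approaches $T$ is appropriate; note that the bound $|\Rm|\leq K$ together with the Ricci flow equation keeps $g^T(t)$ uniformly equivalent to $g^T(0)$, and since $\gamma(t)$ is smooth on the compact interval $[0,T]\subset[0,T')$, the comparison $\Lambda^{-2}\gamma\leq g^T\leq \Lambda^2\gamma$ holds with a single $\Lambda$, so this concern is resolvable.
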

\begin{proof}
By \cite{gianniotis} 
$$\sup_{M\times [0,T)}|\Rm(g(t)|_{g(t)}+\sup_{\partial M\times [0,T)}|\mathcal A(g(t))|_{g^T(t)}=\infty.$$
In addition, since the data $\gamma, \eta$ are assumed to be smooth up to time $T'>T$, condition (2) in the Definition \ref{controlled_data}  is easily seen to be satisfied. Moreover, the bound on the curvature implies that $g(t)$ is uniformly equivalent to $g(0)$ for all $t\in [0,T)$, which suffices for condition (1) of Definition \ref{controlled_data} to hold. Hence, $(g(t),\gamma(t))$ is $\Lambda$-controlled in $[0,T]$ and Theorem \ref{sffb} asserts that the second fundamental form remains bounded as $t\rightarrow T$, which finishes the proof.
\end{proof}
\begin{corollary}
If $M$ is a compact 3-manifold, Theorem \ref{comp2} holds without assumption (2) on the second fundamental form.
\end{corollary}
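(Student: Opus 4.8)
The plan is to run the proof of Theorem \ref{comp2} verbatim, but to supply the missing hypothesis (2) — the uniform bound on the second fundamental form — from Theorem \ref{sffb} instead of assuming it. So the first step is to check that the remaining hypotheses of Theorem \ref{comp2} together with Theorem \ref{sffb} indeed produce such a bound. We are given a sequence $(M_k,p_k)$ of pointed compact $3$-manifolds with boundary, complete Ricci flows $(g_k(t),\gamma_k(t))$ on $t\in(a,b]$ with $\Lambda$-controlled boundary, a uniform curvature bound $|\Rm(g_k)|_{g_k}\le K$, and a boundary injectivity radius bound $i_{b,g_k(0)}\ge i_0$. After translating time so that the initial slice sits at a fixed time, Theorem \ref{sffb} applies on each $M_k$ (its hypotheses being exactly a curvature bound, $\Lambda$-controlled boundary, and a lower bound on $i_{b,g_k(0)}$, noting that $\|\mathcal A(g_k(0))\|_\infty$ is finite by compactness of $\partial M_k$, so $i_{b,g_k(0)}\ge i_0 \ge i_0/\|\mathcal A(g_k(0))\|_\infty$ after relabelling constants if needed — or one simply uses the a priori bound directly). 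The constant $C_0$ in estimate \eqref{sffestimate1} depends only on $K$, the interval length, $\Lambda$, $l$, and $i_0$, hence is uniform in $k$. This yields
$$|\mathcal A(g_k(t))|_{g_k^T(t)}\le C_0 \quad \textrm{on } \partial M_k\times(a,b] \textrm{ for all } k,$$
which is precisely assumption (2) of Theorem \ref{comp2}, with $C_0$ in place of $K$.

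Having recovered the missing hypothesis, I would then simply invoke Theorem \ref{comp2} with the constant $\max\{K,C_0\}$ playing the role of its $K$: the conclusion is that, up to a subsequence, $(M_k,g_k(t),\gamma_k(t),p_k)\to(M_\infty,g_\infty(t),\gamma_\infty(t),p_\infty)$ in the $C^{m-3}$ topology, which is the assertion of the corollary. No new analysis is needed beyond a careful bookkeeping of which constants are allowed to depend on which data.

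The only mild subtlety — and the step I expect to need the most care — is the compatibility of the time intervals and of the injectivity-radius normalization in Theorem \ref{sffb}: that theorem is stated for a flow on $[0,T]$ with the scaled hypothesis $i_{b,g(0)}\ge i_0/\|\mathcal A(g(0))\|_\infty$, whereas Theorem \ref{comp2} is stated on $(a,b]$ with an unnormalized bound $i_{b,g_k(0)}\ge i_0$. One should therefore first fix a compact subinterval $[a',b]\subset(a,b]$ containing the reference time, apply Theorem \ref{sffb} there (after a harmless time translation), observe that on $\partial M_k$ compact the quantity $\|\mathcal A(g_k(0))\|_\infty$ is finite so the normalized hypothesis is automatically implied by $i_{b,g_k(0)}\ge i_0$ up to adjusting $i_0$, and note that the resulting bound $C_0$ is independent of $k$ precisely because it depends only on $K,\Lambda,l,i_0$ and the interval length. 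Once this is checked, everything else is a direct citation of Theorem \ref{comp2}.
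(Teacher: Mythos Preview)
Your proposal is correct and follows exactly the route the paper intends: the corollary is stated without proof, as an immediate consequence of Theorem~\ref{sffb} feeding the missing bound on $|\mathcal A|$ into Theorem~\ref{comp2}.

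The one point that needs slightly more care than you give it is the normalization of the injectivity-radius hypothesis. The implication $i_{b,g_k(0)}\ge i_0 \Rightarrow i_{b,g_k(0)}\ge i_0'/\|\mathcal A(g_k(0))\|_\infty$ with $i_0'$ \emph{uniform in $k$} fails if $\|\mathcal A(g_k(0))\|_\infty\to 0$ along a subsequence, so ``relabelling constants'' does not work as stated. Your fallback, however, is valid: inspecting the proof of Theorem~\ref{sffb}, the normalized hypothesis is only used to obtain (\ref{ibbound}), and the unnormalized bound $i_{b,g(0)}\ge i_0$ already suffices for this---in the blow-up argument one has $\mathbf K_{\max}\to\infty$, so eventually $i_0>\pi/(2\mathbf K_{\max})$ and the differential inequality (\ref{ibeqn}) alone gives $i_{b,g(t)}\ge e^{-CT}\pi/(2\mathbf K_{\max})$, which is all that is needed. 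With that adjustment the constant $C_0$ in (\ref{sffestimate1}) depends only on $K,T,\Lambda,l,i_0$, uniformly in $k$, and the rest of your argument goes through verbatim.
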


\begin{acknowledgements}
The author would like to thank Michael Anderson and Peter Topping for many interesting discussions. This research has been supported by the EPSRC on a Programme Grant entitled ``Singularities of Geometric Partial Differential Equations'' reference number EP/K00865X/1.
\end{acknowledgements}

\def\cprime{$'$} \def\cprime{$'$} \def\cprime{$'$}
\providecommand{\bysame}{\leavevmode\hbox to3em{\hrulefill}\thinspace}
\providecommand{\MR}{\relax\ifhmode\unskip\space\fi MR }
\providecommand{\MRhref}[2]{%
  \href{http://www.ams.org/mathscinet-getitem?mr=#1}{#2}
}
\providecommand{\href}[2]{#2}

\end{document}